\documentclass[10pt,reqno]{amsart}
\usepackage{amsmath,amssymb,amsthm}
\usepackage{graphicx}
\usepackage[all]{xypic}
\usepackage[utf8,nocaptions]{vietnam}
\usepackage{hyperref}
\input xypic
\def\thmsection{section}
\def\thmchangesection{changesection}

\def\thmchangechapter{changechapter}
\def\thmchange{change}
\def\thmplain{plain}
\ifx\theoremnumberstyle\thmplain
  \theoremstyle{break-italic}
  \newtheorem{satz}{Satz}
\else
  \ifx\theoremnumberstyle\thmsection
    \theoremstyle{break-italic}
    \newtheorem{satz}{Satz}[section]
  \else
    \ifx\theoremnumberstyle\thmchange
      \swapnumbers
      \theoremstyle{break-italic}
      \newtheorem{satz}{Satz}
    \else
      \ifx\theoremnumberstyle\thmchangesection
         \swapnumbers
         \theoremstyle{break-italic}
         \newtheorem{satz}{Satz}[section]
      \else
        \ifx\theoremnumberstyle\thmchangechapter
           \swapnumbers
           \theoremstyle{break-italic}
           \newtheorem{satz}{Satz}[chapter]
        \else
          \ifx\theoremnumberstyle\thmsection
             \theoremstyle{break-italic}
             \newtheorem{satz}{Satz}[section]
          \else
            \theoremstyle{break-italic}
            \newtheorem{satz}{Satz}[section]
          \fi
        \fi
      \fi
    \fi
  \fi
\fi

\theoremstyle{break-italic}
\newtheorem{theorem}[satz]{Theorem}
\newtheorem{lemma}[satz]{Lemma}

\newtheorem{corollary}[satz]{Corollary}

\newtheorem*{conjecture*}{Conjecture}

\theoremstyle{break-roman}
\newtheorem{definition}[satz]{Definition}

\newtheorem{remark}[satz]{Remark}

\theoremstyle{standard}

\newtheorem{claim}[satz]{Claim}
\theoremstyle{varthm-roman}
\newtheorem*{varthm-roman}{}
\theoremstyle{varthm-italic}
\newtheorem*{varthm-italic}{}
\theoremstyle{varthm-roman-break}
\newtheorem*{varthm-roman-break}{}
\theoremstyle{varthm-italic-break}
\newtheorem*{varthm-italic-break}{}
\theoremstyle{varthm-roman-no-punctuation}
\newtheorem{varthm-roman-no-punctuation-numbered}[satz]{}
\theoremstyle{varthm-italic-no-punctuation}
\newtheorem{varthm-italic-no-punctuation-numbered}[satz]{}

\newenvironment{varthm-roman-numbered}[1]{
  \begin{varthm-roman-no-punctuation-numbered}
    \mbox{\rm\textbf{#1}}
  }{\end{varthm-roman-no-punctuation-numbered}}
\newenvironment{varthm-italic-numbered}[1]{
  \begin{varthm-italic-no-punctuation-numbered}
    \mbox{\rm\textbf{#1}}
  }{\end{varthm-italic-no-punctuation-numbered}}
\newenvironment{varthm-roman-break-numbered}[1]{
  \begin{varthm-roman-no-punctuation-numbered}
    \mbox{\rm\textbf{#1}\newline}
  }{\end{varthm-roman-no-punctuation-numbered}}
\newenvironment{varthm-italic-break-numbered}[1]{
  \begin{varthm-italic-no-punctuation-numbered}
    \mbox{\rm\textbf{#1}}\newline
  }{\end{varthm-italic-no-punctuation-numbered}}
\numberwithin{equation}{section}

\newtheorem{proposition}{Proposition}[section]

\usepackage{chngcntr}
\everymath{\displaystyle}
\begin{document}
\title[Perturbation of monic matrix polynomials]{Perturbation of monic matrix polynomials}
\author{Cong Trinh Le} 
\email{lecongtrinh@qnu.edu.vn}
\address{Department of Mathematics and Statistics, Quy Nhon University, Quy Nhon Nam Ward, Vietnam}
\author{Gue Myung Lee}
\email{gmlee@pknu.ac.kr}
\address{Department of Applied Mathematics, Pukyong National University, Busan 48513, Korea}
\author{Yongdo Lim}
\email{ylim@skku.edu}
\address{Department of Mathematics, Sungkyunkwan University, Suwon 440--746, Korea}
\author{Tien Son Pham}
\email{sonpt@dlu.edu.vn}
\address{Department of Mathematics, University of Dalat, 1 Phu Dong Thien Vuong, Dalat, Vietnam}
\date{\today}
\begin{abstract}
In this paper, we study the stability of matrix polynomials under structured
perturbations of their coefficients. More precisely, we consider a family of
matrix polynomials
\[
P_u(\lambda)=A_d(u)\lambda^d+A_{d-1}(u)\lambda^{d-1}+\cdots+A_0(u),
\]
whose matrix coefficients depend continuously and semialgebraically on a
parameter vector $u\in\mathbb{C}^p$.

Assuming that the matrix polynomial is monic, we show that the spectrum, the
$\varepsilon$-pseudospectrum, the numerical range, and the joint numerical
range associated with $P_u(\lambda)$ define set-valued maps that are
H\"older continuous with respect to the parameter $u$. Moreover, the
parameter space $\mathbb{C}^p$ can be decomposed into a finite union of
analytic semialgebraic submanifolds such that, on each submanifold, the
eigenvalues and the Jordan pairs of $P_u(\lambda)$ depend analytically on $u$.

We also note that most of the results remain valid if the monicity assumption
is replaced by the local nonsingularity of the leading coefficient matrix
$A_d(u)$. However, the monic setting is adopted throughout the paper in order
to simplify the exposition and to avoid additional technical assumptions,
which are required in particular for results concerning numerical ranges.
\end{abstract}
\keywords{Matrix polynomial; structured perturbation; spectrum;  $\epsilon$-pseudospectrum; numerical range; Jordan pair; stability; semialgebraic set}
\subjclass[2000]{15A18; 15A22; 65F15; 65F35; 14P10; 32B20}

\maketitle
\section{Introduction}

Let $\mathbb{C}^{n\times n}$ be the algebra of all $n \times n$ complex matrices and consider the \emph{perturbed matrix polynomial} of the form
\begin{eqnarray*}
P_{u}(\lambda) &:=& A_d(u)  \lambda^d  + A_{d - 1}(u) \lambda^{d - 1}  + \cdots + A_{0}(u),
\end{eqnarray*}
where $\lambda \in \mathbb{C}$  and $A_{k}(u) \in \mathbb{C}^{n\times n}$ for $u \in \mathbb{C}^p.$  The study of matrix (and operator) polynomials has a long history, especially with regard to their spectral analysis and numerical ranges; for more details, we refer the reader to \cite{Gantmacher1959, Gohberg1982, Lancaster1966, Markus1988}.

Let us begin with some formal definitions. First, for each parameter $u \in \mathbb{C}^p,$ the set
\begin{eqnarray*}
\sigma(u) &:=& \{ \lambda \in \mathbb{C} \ | \ \det P_{u}(\lambda) = 0\}
\end{eqnarray*}
is called the \emph{spectrum} of the matrix polynomial $P_u(\lambda)$. Given a nonnegative real number $\epsilon,$ the \emph{$\epsilon$-pseudospectrum} of $P_u(\lambda)$ is defined by
\begin{eqnarray*}
\Lambda_{\epsilon}(u) &:=& \{\lambda \in \mathbb C \ | \ \det P_{v}(\lambda)=0 \mbox{ for some } v \in \mathbb C^p \mbox{ with } \|v - u\|\leq \epsilon\}.
\end{eqnarray*}
Our definition of the $\varepsilon$-pseudospectrum corresponds to a structured  pseudospectrum induced by perturbations of the parameter vector $u$. While the classical pseudospectrum considers arbitrary perturbations of bounded  norm, our definition restricts perturbations to those arising from admissible parameter variations. This framework is consistent with structured pseudospectra studied in, e.g., control and polynomial eigenvalue problems \cite{Tisseur2001-1, Higham2003}.

Recall that for a matrix $A \in \mathbb{C}^{n \times n}$, its numerical range is defined as
$
W(A) = \{x^*Ax : x \in \mathbb{C}^n, \ x^*x = 1\}.
$
The \emph{numerical range} of $P_u(\lambda)$ is defined by
\begin{eqnarray*}
W(u) &:=& \{\lambda \in \mathbb{C} \ | \ x^*P_u(\lambda)x =0 \mbox{ for some }  x\in \mathbb C^n, x^*x=1\},
\end{eqnarray*}
which contains $\sigma(u).$ Here and in the following the symbol $x^*$ stands for conjugate transpose of $x.$ An important extension of the classical numerical range is the  \emph{joint numerical range} of the $(d + 1)$-tuple of the matrices $A_0(u), \ldots, A_{d}(u)$ which is defined by
\begin{eqnarray*}
JW(u) &:=& \{(x^* A_0(u) x, \ldots, x^*A_{d}(u)x) \ | \   x \in \mathbb{C}^n, x^*x=1\}.
\end{eqnarray*}

 Spectra, $\epsilon$-pseudospectra, numerical ranges and joint numerical ranges of matrix polynomials are useful in the study of various pure and applied subjects; see, for example, \cite{Armentia2017, Binding1991, Boulton2008, Doyle1982, Duffin1955, Fan1988, Gau2011, Gohberg1982, Gustafson1997, Gutkin2004, Higham2002, Higham2003, Horn2013, Jonckheere1998, Lancaster1966, Lancaster2001, Lancaster2003, Lancaster2005, Li1994, Li1999, Li2009, Maroulas1997, Mosier1986, Poon1997, Psarrakos2000, Tisseur2001-1, Tisseur2001-2, Trefethen1992, Trefethen1999-1, Trefethen1999-2, Zeng2014} and the references therein.

Inspired by the works in \cite{Boulton2008, Lancaster2005, Li1994, Tisseur2001-1}, in this paper we are principally interested in the following question, which is important in application: \textit{How stable are the spectrum $\sigma(u),$ the $\epsilon$-pseudospectrum $\Lambda_\epsilon(u),$ the numerical range $W(u),$ and the joint numerical range $JW(u)$ of the matrix polynomial $P_u(\lambda)$ when we perturb its coefficients $A_k(u),$  $ k = 0, 1,\ldots, d, $ in $u\in \mathbb C^p$?}  In other words, \textit{how stable are the following  set-valued maps?}
\begin{itemize}
\item The spectrum set-valued map
$$\sigma \colon \mathbb{C}^p \rightrightarrows \mathbb{C}, \quad u \mapsto \sigma(u);$$

\item  The  $\epsilon$-pseudospectrum set-valued map
$$\Lambda_\epsilon  \colon \mathbb{C}^p \rightrightarrows \mathbb{C}, \quad u \mapsto \Lambda_\epsilon(u);$$

\item  The numerical range set-valued map
 $$W \colon \mathbb{C}^p \rightrightarrows \mathbb{C}, \quad u \mapsto W(u);$$

\item  The joint numerical range set-valued map
$$JW \colon \mathbb{C}^p \rightrightarrows \mathbb{C}^{d + 1}, \quad u \mapsto JW(u).$$
\end{itemize}

Assume that the matrix polynomial $P_u(\lambda)$ is {\em monic} (i.e., $A_{d}(u) \equiv I_d$-the identity matrix) with  its matrix coefficients $A_{k}(u), k = 0, 1, \ldots, d - 1, $ being \emph{continuous semialgebraic} on the parameter space $\mathbb C^p.$ By exploiting results from semi-algebraic geometry, we will show  that (i) the above set-valued  maps are H\"older continuous and (ii) there exists an open and dense semialgebraic subset of $\mathbb C^p$ over which the maps $\sigma$, $\Lambda_\epsilon$ and $W$ are H\"older continuous  with an explicit degree, while the map $JW$ is Lipschitz continuous. Perturbations of matrix polynomials are not arbitrary but are induced by  variations of a parameter vector $u \in \mathbb{C}^p$. This corresponds to structured perturbations of the matrix coefficients, which naturally arise in applications where coefficients depend on physical, geometric, or design parameters. Our analysis focuses on the stability of spectral objects under such structured perturbations. 

The paper is organized as follows. In Section~\ref{SecPreliminaries} we recall some basic notions and results in semialgebraic geometry. We present  in Section~\ref{Section-Stability} the H\"older continuity of  the set-valued maps $\sigma, \Lambda_\epsilon, W, ~\partial W$ and $JW$, and of the spectral radius
$$\mathfrak{r}(u):= \max \{|\lambda|, \lambda \in \sigma(u)\}.$$
Moreover, we also  give in this section some geometric properties of the $\epsilon$-pseudospectrum and its boundary. Finally, it is shown in Section~\ref{jordanpair} that the parameter space $\mathbb{C}^p$ can be decomposed into a finite union of analytic semialgebraic submanifolds such that the restrictions of the eigenvalues and the Jordan pairs of the monic matrix polynomial $P_u(\lambda)$ on each submanifold  are analytic.

\begin{remark}\rm
 In general, the assumption that the matrix polynomial $P_u(\lambda)$ is monic can be relaxed. 
Most of the results of this paper remain valid if the leading coefficient $A_d(u)$ is merely 
\emph{locally nonsingular}, that is, if for each $\tilde u \in \mathbb{C}^p$ there exists a 
neighborhood $U$ of $\tilde u$ such that $A_d(u)$ is nonsingular for all $u \in U$.

Nevertheless, we restrict ourselves to the monic case $A_d(u)\equiv I_d$ in order to simplify the 
exposition and to avoid additional technical assumptions. In some sections--most notably those 
concerning numerical ranges--stronger conditions on the leading coefficient are implicitly 
required to guarantee boundedness and stability of the corresponding sets. These additional 
assumptions are stated explicitly where needed.
\end{remark}

\subsection*{Notation}
For each $p\in \mathbb N$, $p>0$, the space $\mathbb{C}^p$ is equipped with a norm, denoted by $\| \cdot\|.$  For a given $u \in \mathbb{C}^p$ and a given $\delta > 0,$ the closed ball in $\mathbb{C}^p$ with center $x$ and radius $\delta$ is denoted by $\mathbb{B}(u, \delta).$    For a matrix $A \in \mathbb C^{n\times n},$ $\|A\|$ denotes a subordinate matrix norm of $A.$

\section{Preliminaries on  semialgebraic geometry} \label{SecPreliminaries}

Let us recall some basic notions and results on semialgebraic geometry (see, for example, \cite{Bochnak1998}) which we need.

\begin{definition}{\rm
\begin{enumerate}
  \item[(i)] A subset of $\mathbb{R}^n$ is called {\em semialgebraic} if it is a finite union of sets of the form
$$\{x \in \mathbb{R}^n \ | \ f_i(x) = 0, i = 1, \ldots, k; f_i(x) > 0, i = k + 1, \ldots, p\},$$
where all $f_{i}$ are polynomials.
 \item[(ii)]
Let $A \subset \Bbb{R}^n$ and $B \subset \Bbb{R}^m$ be semialgebraic sets. A map $F \colon A \to B$ is said to be {\em semialgebraic} if its graph
$$\{(x, y) \in A \times B \ | \ y = F(x)\}$$
is a semialgebraic subset in $\Bbb{R}^n\times\Bbb{R}^m.$
\end{enumerate}
}\end{definition}

The class of semialgebraic sets is closed under taking finite intersections, finite unions, and complements; a Cartesian product of semialgebraic sets is a semialgebraic set. Moreover, a major fact concerning the class of semialgebraic sets is its stability under linear projections (see, for example, \cite{Bochnak1998}).

\begin{theorem}[Tarski--Seidenberg theorem] \label{TarskiSeidenbergTheorem}
The image of a semialgebraic set by a semialgebraic map is semialgebraic.
\end{theorem}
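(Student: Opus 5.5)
The plan is to reduce the statement to the classical projection form of Tarski--Seidenberg: the image of a semialgebraic set under a linear coordinate projection $\pi\colon\mathbb{R}^{n}\times\mathbb{R}^{m}\to\mathbb{R}^{m}$ is semialgebraic. This is a standard result in \cite{Bochnak1998}, and the paper takes it for granted just before the statement. So let $A\subset\mathbb{R}^n$ be semialgebraic and $F\colon A\to B\subset\mathbb{R}^m$ a semialgebraic map, and write $\Gamma_F=\{(x,y)\in A\times B \mid y=F(x)\}$ for its graph. By definition of a semialgebraic map, $\Gamma_F$ is a semialgebraic subset of $\mathbb{R}^n\times\mathbb{R}^m$.

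The key observation is the identity
\[
F(A)=\{y\in\mathbb{R}^m \mid \exists\, x\in\mathbb{R}^n,\ (x,y)\in\Gamma_F\}=\pi(\Gamma_F).
\]
This holds directly: $y\in F(A)$ if and only if $y=F(x)$ for some $x\in A$, which is the same as $(x,y)\in\Gamma_F$. The image is therefore a linear projection of a semialgebraic set, and the projection theorem gives the claim.

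If a self-contained argument for the projection step is wanted, I would induct on $n$, eliminating one real variable at a time. This reduces everything to showing that the projection $\mathbb{R}^{k+1}\to\mathbb{R}^k$ of a basic set $\{f_1=\dots=f_r=0,\ g_1>0,\dots,g_s>0\}$ is semialgebraic. The method is cylindrical decomposition in the last variable $t$.
\begin{enumerate}
\item Partition the parameter space $\mathbb{R}^k$ into finitely many semialgebraic pieces. On each piece the polynomials $f_i(\cdot,t)$ and $g_j(\cdot,t)$ have constant degree in $t$, and the family has a constant number of distinct real roots. The pieces are cut out by polynomial conditions on the coefficients: subresultants and Sturm--Habicht sequences.
\item Over each piece, the roots are continuous functions of the parameters, and the sign conditions of all the polynomials on each root and on each interval between consecutive roots are constant.
\item Whether some $t$ satisfies the system is then a property of the piece alone. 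Hence the projection is a finite union of pieces, and so it is semialgebraic.
\end{enumerate}
Finite unions are handled because projection commutes with unions.

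The main obstacle lies entirely in this elimination step. One must show that the number of distinct real roots, and the sign pattern at and between them, is determined by finitely many sign conditions on polynomial expressions in the coefficients. This is the Thom's lemma / Sturm sequence machinery. I would cite it from \cite{Bochnak1998} rather than reprove it, since the paper quotes the theorem as a known result.
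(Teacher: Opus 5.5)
Your proposal is correct and follows essentially the paper's route. The paper gives no proof, quoting the result from \cite{Bochnak1998} right after recalling stability under linear projections, and your identity $F(A)=\pi(\Gamma_F)$ is the standard reduction to that projection form (for a semialgebraic subset $S\subseteq A$, use $\Gamma_F\cap(S\times\mathbb{R}^m)$ in place of $\Gamma_F$). Your optional elimination sketch defers the same key step to \cite{Bochnak1998}. In that sketch the partition must fix the full sign pattern of the family along the $t$-line, and also the number of distinct complex roots if you want the roots to be continuous, not merely the number of distinct real roots.
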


By the Tarski--Seidenberg theorem, it is not hard to see that the closure, the interior, and the boundary of a semialgebraic set are semialgebraic sets.

\begin{definition}{\rm
Let $A \subset {\mathbb R}^n$ and $f \colon A \rightarrow {\mathbb R}^m.$ The map $f$ is said to be {\em analytic at a point $p \in A$} if there exists an open neighborhood $U$ of $p$ in ${\mathbb R}^n$ and an analytic map $\hat{f} \colon U \rightarrow {\mathbb R}^m$ such that the restriction $\hat{f}|_{A \cap U}$ is equal to $f|_{A \cap U}.$ The map $f$ is said to be {\em analytic} on $A$ iff it is analytic at every point in $A.$
}\end{definition}

\begin{definition} {\rm
A nonempty subset $A$ of ${\mathbb R}^n$ is an {\em analytic  $k$-dimensional submanifold} of ${\mathbb R}^n$, where $1 \le k \le n,$ if for each point $p$ in $A$ there exists an open neighborhood $W$ of $p$ in ${\mathbb R}^n$ and an open set $U$ in ${\mathbb R}^k$ such that after some permutation of coordinates we have
$$A \cap W = \text{graph}\, \phi := \{(u, \phi(u)) \, | \, u \in U\},$$
where $\phi \colon U \to {\mathbb R}^{n - k}$ is an analytic map.

By convention, a point in ${\mathbb R}^n$ is an analytic 0-dimensional submanifold of ${\mathbb R}^n.$
}\end{definition}

By the Cell Decomposition Theorem (see \cite{Bochnak1998}), for any semialgebraic subset $A \subset \mathbb{R}^n,$ we can write $A$ as a disjoint union of finitely many analytic semialgebraic manifolds of different dimensions.  The {\em dimension} $\dim A$ of a nonempty semi-algebraic set $A$ can thus be defined as the dimension of the manifold of highest dimension of its decomposition. This dimension is well defined and independent of the decomposition of $A.$ By convention, the dimension of the empty set is taken to be negative infinity. We will need the following result (see \cite{Bochnak1998}).

\begin{proposition} \label{DimensionProposition}
If $A \subset \mathbb{R}^n$ is a nonempty semialgebraic set, then the boundary $\partial {A}$ of $A$ is also a semialgebraic set and it holds that
$$\dim \partial {A} < \dim A.$$
\end{proposition}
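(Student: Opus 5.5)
We need to prove Proposition \ref{DimensionProposition}: for a nonempty semialgebraic $A\subset\mathbb{R}^n$, the boundary $\partial A$ is semialgebraic and $\dim\partial A<\dim A$. Here $\partial A=\overline{A}\setminus A$, the frontier; for the topological boundary $\overline{A}\setminus\operatorname{int}A$ the inequality fails, e.g.\ for $A=\mathbb{R}^n$ with $n\ge1$.

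\emph{Semialgebraicity.} Write the closure with quantifiers:
\[
\overline{A}=\{x\in\mathbb{R}^n \mid \forall \varepsilon>0\ \exists y\in A,\ \|x-y\|^2<\varepsilon^2\}.
\]
The Euclidean norm squared is a polynomial, so the set inside the quantifiers is semialgebraic in $(x,y,\varepsilon)$. Eliminate the existential quantifier by projecting, which is allowed by Theorem \ref{TarskiSeidenbergTheorem}. Eliminate the universal quantifier as the complement of a projection of a complement. Hence $\overline{A}$ is semialgebraic, and so is $\overline{A}\setminus A$, because the class is closed under complements and intersections.

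\emph{Dimension inequality.} Let $d=\dim A$.
\begin{enumerate}
\item Apply the Cell Decomposition Theorem to $A$ to get $A=\bigsqcup_{i} C_i$, finitely many cells, each $C_i$ an analytic submanifold semialgebraically homeomorphic to an open cube $(0,1)^{d_i}$ with $d_i\le d$.
\item Since $\overline{A}=\bigcup_i\overline{C_i}$, we get $\overline{A}\setminus A\subset\bigcup_i(\overline{C_i}\setminus C_i)$.
\item Dimension of a finite union is the maximum, so it suffices to show $\dim(\overline{C}\setminus C)<\dim C$ for a single cell $C$.
\item Prove this by induction on $n$, following the cylindrical structure. A cell in $\mathbb{R}^n$ is either a graph $\{(x',f(x')) \mid x'\in D\}$ or a band $\{(x',t) \mid x'\in D,\ f(x')<t<g(x')\}$ over a cell $D\subset\mathbb{R}^{n-1}$, with $f,g$ continuous semialgebraic. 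In both cases the frontier splits into two parts:
\begin{itemize}
\item the part lying over $D$ itself, namely the graphs of $f$ and $g$ in the band case, of dimension $\dim D=\dim C-1$;
\item the part lying over $\overline{D}\setminus D$, which is contained in $(\overline{D}\setminus D)\times\mathbb{R}$. By the induction hypothesis this has dimension at most $\dim(\overline{D}\setminus D)+1\le \dim D-1+1$.
\end{itemize}
\end{enumerate}

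The main obstacle is that the bound in the band case over $\overline{D}\setminus D$ equals $\dim C$, so it is not strict. To fix this, I would instead invoke the standard fact \cite{Bochnak1998} that the dimension of a semialgebraic set is invariant under semialgebraic bijections and does not increase under semialgebraic maps. Alternatively, $\dim$ equals the maximal $k$ such that the set contains a subset semialgebraically homeomorphic to an open $k$-cube.

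Then argue by contradiction. Suppose $\dim(\overline{C}\setminus C)\ge\dim C=:k$.
\begin{itemize}
\item $\overline{C}\setminus C$ then contains a $k$-dimensional cell $E$. Both $E$ and $C$ lie in the closed set $\overline{C}$, which has dimension $k$, since closure preserves dimension via a Zariski-closure argument.
\item Take a point of $E$ where $\overline{C}$ is a $k$-dimensional analytic manifold. Such a point exists after refining a cell decomposition of $\overline{C}$ compatible with $C$ and $E$.
\item Near that point, $E$ contains an open subset of the $k$-manifold $\overline{C}$, so this open set is disjoint from $C$.
\item This contradicts the point lying in $\overline{C}$: every neighborhood of a point of $\overline{C}$ meets $C$.
\end{itemize}
This contradiction gives $\dim(\overline{C}\setminus C)<\dim C$, and by step 3, $\dim\partial A<\dim A$.
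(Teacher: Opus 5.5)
\textbf{Verdict.} Your reduction to cells is sound, but the key step of the contradiction argument is unjustified and in effect assumes the conclusion.

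\textbf{What is fine.} The paper does not prove this proposition; it quotes it from Bochnak--Coste--Roy \cite{Bochnak1998}, where it is stated for $\overline{A}\setminus A$. So your reading of $\partial A$ is right. Your semialgebraicity argument and the reduction to a single cell (steps 1--3) are also fine.

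\textbf{Two side remarks are wrong.} First, $A=\mathbb{R}^n$ is not a counterexample for the topological boundary, since $\partial\mathbb{R}^n=\emptyset$. A point, or a closed segment in $\mathbb{R}^2$, is one. Second, your induction does not break in the band case. There $\dim C=\dim D+1$, so the bound $\dim(\overline{D}\setminus D)+1\le\dim D=\dim C-1$ is strict. It breaks in the graph case $C=\Gamma(f)$, where $\dim C=\dim D$ and $\overline{C}\setminus C$ can contain vertical segments over points of $\overline{D}\setminus D$. For example, for $f(x,y)=y/x$ on $\{0<y<x<1\}$, the frontier contains $\{(0,0)\}\times[0,1]$.

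\textbf{The gap.} It is the step ``take a point of $E$ where $\overline{C}$ is a $k$-dimensional analytic manifold; such a point exists after refining a cell decomposition''. A decomposition compatible with $C$ and $E$ only gives you a $k$-cell $E'\subset E$. It gives no control over whether $k$-cells of $C$ accumulate at every point of $E'$, and ruling that out is precisely the content of the proposition. Your own next bullet shows this: a point $x\in E'$ near which $\overline{C}$ is a $k$-manifold at once has a neighbourhood disjoint from $C$. So asserting that such a point exists already asserts the conclusion.

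\textbf{The missing ingredient is a finite-fibre argument.}
\begin{enumerate}
\item Let $p\colon\mathbb{R}^n\to\mathbb{R}^k$ be a coordinate projection mapping $E'$ homeomorphically onto an open set. One exists for a $k$-cell; a generic linear projection also works.
\item Since $\dim C=k$, the fibre-dimension formula shows that the set $B$ of $u$ with $C\cap p^{-1}(u)$ infinite satisfies $\dim B<k$.
\item Take a cylindrical decomposition with the coordinates of $p$ first, adapted to $C$, $E'$ and $p^{-1}(B)$. It has an open cell $U\subset p(E')\setminus B$ over which every cell is a graph. Thus $C\cap p^{-1}(U)$ is the union of the graphs of finitely many continuous maps $h_j\colon U\to\mathbb{R}^{n-k}$. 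Also $E'\cap p^{-1}(U)$ is the graph of a continuous $e$ with $e(u)\ne h_j(u)$ for all $u\in U$ and all $j$.
\item By continuity, for $u_0\in U$ a small neighbourhood of $(u_0,e(u_0))$ meets none of the graphs of the $h_j$. Hence it misses $C$, contradicting $E'\subset\overline{C}$.
\end{enumerate}

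\textbf{Alternative repair.} You could instead salvage your step by replacing $\overline{C}$ with the Zariski closure $Z$ of $C$. The points near which $Z$ is not a $k$-dimensional manifold form a set of dimension $<k$. Invariance of domain applied to $E'\subset Z$ then gives the same contradiction. This route, however, imports the Jacobian criterion for real algebraic sets.
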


\begin{proposition}\label{LojasiewiczInequality}
Let $f \colon \mathbb{R}^n \rightarrow \mathbb{R}^m$ be a continuous semialgebraic map. Then for any compact $K \subset \mathbb{R}^n,$ there exist constants $c > 0$ and $\alpha > 0$ such that
\begin{eqnarray*}
\| f(x) - f(x') \| & \le & c \|x - x'\|^\alpha \quad \textrm{ for all } \quad x, x' \in K.
\end{eqnarray*}
\end{proposition}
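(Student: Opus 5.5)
The plan is to reduce the inequality to a one-variable growth estimate near the diagonal and then invoke the classical Łojasiewicz inequality. Compactness handles the part away from the diagonal.

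Fix a compact $K\subset\mathbb{R}^n$. Replacing $K$ by a closed ball containing it, we may assume $K$ is a compact semialgebraic set. Define the auxiliary function
\[
\varphi(t):=\sup\{\|f(x)-f(x')\| \ | \ x,x'\in K,\ \|x-x'\|\le t\},\qquad t\ge 0 .
\]
Its graph is obtained from a semialgebraic set by a first-order formula. By the Tarski--Seidenberg theorem (Theorem~\ref{TarskiSeidenbergTheorem}), $\varphi$ is therefore a semialgebraic function of one variable. Since $f$ is continuous and $K$ is compact, $f$ is uniformly continuous on $K$. Hence $\varphi$ is finite and nondecreasing, and $\varphi(t)\to 0=\varphi(0)$ as $t\to0^+$.

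The key step is the growth of a one-variable semialgebraic function at $0$. By the monotonicity theorem (cell decomposition in dimension one), $\varphi$ is analytic on some interval $(0,t_0)$. Moreover, one-variable semialgebraic functions admit a Puiseux expansion at $0^+$: there is a polynomial relation $Q(t,\varphi(t))=0$, and the Newton–Puiseux theorem gives either $\varphi\equiv0$ near $0$ or $\varphi(t)=a\,t^{\alpha}+o(t^{\alpha})$ with $a>0$ and rational $\alpha>0$. Positivity of $\alpha$ comes from $\varphi(t)\to0$. In either case there are $t_1\in(0,t_0)$ and $c_1>0$ with $\varphi(t)\le c_1 t^{\alpha}$ for $0\le t\le t_1$. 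This is the step I expect to be the main obstacle: it needs the Puiseux-type structure of semialgebraic germs, a standard fact from \cite{Bochnak1998} not stated in the paper. Alternatively, one could apply the classical Łojasiewicz inequality comparing the two semialgebraic functions $g(x,x')=\|f(x)-f(x')\|$ and $h(x,x')=\|x-x'\|$ on the compact set $K\times K$. There $h^{-1}(0)\subset g^{-1}(0)$, which gives $g^{N}\le C h$, so $\alpha=1/N$.

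It remains to handle pairs far apart. For $\|x-x'\|\ge t_1$, use boundedness: $\|f(x)-f(x')\|\le 2M$ with $M=\max_K\|f\|$, and $2M\le (2M t_1^{-\alpha})\|x-x'\|^{\alpha}$. For $\|x-x'\|<t_1$, the previous step gives $\|f(x)-f(x')\|\le\varphi(\|x-x'\|)\le c_1\|x-x'\|^{\alpha}$. Taking $c=\max\{c_1,2Mt_1^{-\alpha}\}$ yields the claimed Hölder estimate on $K$.
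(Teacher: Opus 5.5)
Your argument is correct, and your ``alternatively'' sentence is exactly the paper's proof. The paper only cites the {\L}ojasiewicz inequality, in the standard form: if $g,h$ are continuous semialgebraic on a compact semialgebraic set and $h^{-1}(0)\subset g^{-1}(0)$, then $|g|^N\le C|h|$. Applied to $g(x,x')=\|f(x)-f(x')\|$ and $h(x,x')=\|x-x'\|$ on $K\times K$ (after enlarging $K$ to a ball), this gives the statement at once with $\alpha=1/N$, and it needs no separate treatment of pairs with $\|x-x'\|\ge t_1$.

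Your main route goes instead through the modulus of continuity $\varphi$ and the Puiseux structure of one-variable semialgebraic germs at $0^+$. This is essentially the usual proof of the {\L}ojasiewicz inequality, specialized to this particular pair $(g,h)$. It is more self-contained, but it rests on the growth/Puiseux lemma, which is not among the paper's preliminaries, and it needs your extra compactness step for far-apart pairs. In exchange it gives a little more: your exponent is the leading Puiseux exponent of $\varphi$ itself. It is therefore rational and, when $\varphi\not\equiv 0$ near $0$, the best possible exponent on the set where $\varphi$ is computed, whereas the {\L}ojasiewicz route only yields some $1/N$.

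Both arguments tacitly assume the norms are semialgebraic (e.g.\ Euclidean). This is harmless, by equivalence of norms.
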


\begin{proof}
This is an immediate consequence of the {\L}ojasiewicz inequality (see, for example, \cite[Theorem~1.14]{HP2017}).
\end{proof}

\begin{remark}{\rm
In general, it is difficult to obtain explicit formulas for the \L ojasiewicz exponent $\alpha$ in the above proposition; for more details on this topic, see \cite{Dinh2016, Dinh2017, Kurdyka2014, Pham2012}.
}\end{remark}

  By identifying $\mathbb{C}$ with $\mathbb{R}^2$ we can define semialgebraic sets in $\mathbb{C}^n$ and semialgebraic maps from
semialgebraic sets in $\mathbb{C}^n$ to $\mathbb{C}^m.$ As a consequence, the results in this subsection still hold if we replace the real field $\mathbb{R}$ by the complex filed $\mathbb{C}.$

\section{Stability of set-valued maps} \label{Section-Stability}

Throughout the rest of this paper we let
$$P_{u}(\lambda) := A_d(u) \lambda^d  + A_{d - 1}(u) \lambda^{d - 1}  + \cdots + A_{0}(u)$$
be a matrix polynomial, where $\lambda$ is a complex variable and $A_{k}(u), k = 0, \ldots, d,$ are complex $n \times n$ matrices whose entries are continuous semialgebraic functions in $u \in \mathbb{C}^p.$

\begin{lemma}\label{Lemma31}
Let $\tilde{u} \in \mathbb{C}^p.$ For  sufficiently small $\delta > 0$ there exist constants $c > 0$ and $\alpha > 0$ such that
$$\max_{k = 0, \ldots, d} \|A_k(u) - A_k(u')\| \le c\|u - u'\|^\alpha \quad \mbox{ for all } \quad u, u' \in \mathbb{B}(\tilde{u}, \delta).$$
\end{lemma}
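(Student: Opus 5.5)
The plan is to apply Proposition~\ref{LojasiewiczInequality} to the single map that collects all coefficient matrices. Identify $\mathbb{C}^p$ with $\mathbb{R}^{2p}$ and $\mathbb{C}^{n\times n}$ with $\mathbb{R}^{2n^2}$, and consider
$$F \colon \mathbb{R}^{2p} \to \mathbb{R}^{2(d+1)n^2}, \qquad u \mapsto (A_0(u), A_1(u), \ldots, A_d(u)).$$
Each entry of each $A_k$ is a continuous semialgebraic function of $u$. Hence $F$ is continuous. Its graph is semialgebraic, because it is the intersection of finitely many semialgebraic sets, each a preimage of the graph of one component under a coordinate projection. One can also invoke the standard fact that a map is semialgebraic if and only if its components are.

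Fix any $\delta > 0$; in fact no smallness is needed. The ball $K := \mathbb{B}(\tilde u, \delta)$ is compact, since it is closed and bounded in finite dimensions for any norm. Proposition~\ref{LojasiewiczInequality} then gives constants $c_0 > 0$ and $\alpha > 0$ such that
$$\|F(u) - F(u')\| \le c_0 \|u - u'\|^\alpha \quad \text{for all } u, u' \in K,$$
where the norm on the left is the Euclidean norm on $\mathbb{R}^{2(d+1)n^2}$. On the right I use the given norm on $\mathbb{C}^p$, which is equivalent to the Euclidean one. Replacing one by the other only changes the constant, since $\|\cdot\|_{\mathrm{eucl}} \le C\|\cdot\|$ implies $\|u-u'\|_{\mathrm{eucl}}^\alpha \le C^\alpha \|u-u'\|^\alpha$.

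For each $k$, $\|A_k(u) - A_k(u')\|_F \le \|F(u) - F(u')\|$, where $\|\cdot\|_F$ is the Frobenius norm. The chosen subordinate norm on $\mathbb{C}^{n\times n}$ is equivalent to the Frobenius norm, say $\|A\| \le C'\|A\|_F$. Taking the maximum over $k = 0, \ldots, d$ gives the claim with $c := C' C^\alpha c_0$.

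There is no real obstacle. The only points needing care are verifying that $F$ is semialgebraic and keeping track of the norm-equivalence constants.
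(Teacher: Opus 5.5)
Your proposal is correct and follows essentially the same route as the paper, whose proof simply cites the continuity and semialgebraicity of the $A_k$ together with Proposition~\ref{LojasiewiczInequality}. You additionally spell out the bundling into a single semialgebraic map and the norm-equivalence constants, and you rightly observe that any $\delta>0$ works, since the closed ball is compact.
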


\begin{proof}
This follows from the continuity and semialgebraicity of the matrices $A_k(u)$, $k=0, \ldots, d,$ and Proposition~\ref{LojasiewiczInequality}.
\end{proof}

 \begin{lemma}\label{Lemma32}
Let $\tilde{u} \in \mathbb{C}^p.$ For  sufficiently small $\delta > 0$ and any compact $K \subset \mathbb{C}$ there exist constants $c > 0$ and $\alpha > 0$ such that
$$\| P_{u}(\lambda) - P_{u'}(\lambda) \| \le c \|u - u'\|^\alpha \quad \textrm{ for all } \quad
\lambda \in K \textrm{ and all } u, u' \in \mathbb{B}(\tilde{u}, \delta).$$
\end{lemma}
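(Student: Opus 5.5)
The plan is to reduce the statement directly to Lemma~\ref{Lemma31} using the triangle inequality and the compactness of $K$. Fix $\tilde u$, and take $\delta>0$ small enough that Lemma~\ref{Lemma31} applies. This gives constants $c_1>0$ and $\alpha>0$ such that
$$\max_{k=0,\ldots,d}\|A_k(u)-A_k(u')\|\le c_1\|u-u'\|^\alpha \quad\text{for all } u,u'\in\mathbb{B}(\tilde u,\delta).$$
The exponent $\alpha$ will be kept unchanged; only the multiplicative constant will be adjusted to absorb the dependence on $\lambda$.

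Next, for $\lambda\in K$ and $u,u'\in\mathbb{B}(\tilde u,\delta)$, I will write
$$P_u(\lambda)-P_{u'}(\lambda)=\sum_{k=0}^d\bigl(A_k(u)-A_k(u')\bigr)\lambda^k.$$
The triangle inequality and homogeneity of the subordinate matrix norm then give
$$\|P_u(\lambda)-P_{u'}(\lambda)\|\le \sum_{k=0}^d|\lambda|^k\,\|A_k(u)-A_k(u')\|\le c_1\|u-u'\|^\alpha\sum_{k=0}^d|\lambda|^k.$$

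Since $K$ is compact, $R:=\max_{\lambda\in K}|\lambda|$ is finite. Hence $\sum_{k=0}^d|\lambda|^k\le \sum_{k=0}^d R^k$ for every $\lambda\in K$. Setting $c:=c_1\sum_{k=0}^d R^k$ yields the claimed inequality, uniformly in $\lambda\in K$.

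There is no genuine obstacle here. The only point requiring care is that $c$ may depend on $K$, which the statement permits, while $\alpha$ comes solely from Lemma~\ref{Lemma31} and so does not depend on $K$. In the monic case the term $k=d$ vanishes identically, but this does not affect the argument.
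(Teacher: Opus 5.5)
Your proposal is correct and follows essentially the same route as the paper. The paper also expands $P_u(\lambda)-P_{u'}(\lambda)$ coefficientwise, bounds $\sum_{k=0}^{d}|\lambda|^k$ by $\sum_{k=0}^{d} c_0^k$ with $c_0=\max_{\lambda\in K}|\lambda|$, and then invokes Lemma~\ref{Lemma31}.
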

\begin{proof}
Let $\delta >0$ and $K \subseteq \mathbb C$ compact be given.  We have for all $\lambda \in K$ and all  $u, u' \in \mathbb{B}(\tilde{u}, \delta),$
\begin{eqnarray*}
\| P_{u}(\lambda) - P_{u'}(\lambda) \| & \le & \sum_{k = 0}^{d} \|A_k(u) - A_k(u')\| |\lambda|^k \\
& \le & \left( \max_{k = 0, \ldots, d} \|A_k(u) - A_k(u')\| \right) \sum_{k = 0}^{d} |\lambda|^k \\
& \le & \left( \max_{k = 0, \ldots, d} \|A_k(u) - A_k(u')\| \right) \sum_{k = 0}^{d} c_0^k,
\end{eqnarray*}
where $c_0 := \max  \{|\lambda| \ | \ {\lambda \in K}\} < + \infty.$ Then the desired conclusion follows from Lemma~\ref{Lemma31}.
\end{proof}

\begin{definition}{\rm
A set-valued map\footnote{Recall that ${\mathcal S}$ is a {\em set-valued map} or a {\em multifunction} from $X$ to $Y,$ denoted by ${\mathcal S} \colon X  \rightrightarrows Y,$ if, for every $u \in X,$ ${\mathcal S}(u)$ is a subset of $Y.$} ${\mathcal S} \colon \mathbb{C}^{p} \rightrightarrows \mathbb{C}^{q}$ is said to be {\em H\"older continuous of degree $\alpha$} at $\tilde{u} \in \mathbb{C}^p$ when there are a constant $c > 0$ and an open  neighborhood $U \subset \mathbb{C}^n$ of $\tilde{u}$ such that
$$\mathcal{S}(u) \subset \mathcal{S}(u') + c \|u - u'\|^{\alpha} \mathbb{B} \quad \textrm{ for all } \quad u, u' \in U,$$
where $\mathbb{B}$ stands for the unit closed ball in $\mathbb{C}^{q}.$
}\end{definition}

In what follows we assume that $P_u(\lambda)$ is a {\em monic} matrix polynomial (i.e., $A_d(u)$ is the identity matrix $I$ in $\mathbb C^{n\times n}$), unless stated otherwise, and study the H\"older continuity of the spectrum,  (resp., the $\epsilon$-pseudospectrum, the numerical range, and the joint numerical range) set-valued map.

\subsection{The continuity of the spectrum set-valued map}

For each $u \in \mathbb{C}^p,$  we recall that the {\em spectrum} of  the matrix polynomial $P_{u}(\lambda)$ is defined as
\begin{eqnarray*}
\sigma(u) := \{ \lambda \in \mathbb{C} \ | \ \det P_{u}(\lambda) = 0\}.
\end{eqnarray*}
 Note that $\det P_{u}(\lambda)$ is a monic polynomial of degree $dn$ in the variable $\lambda,$   so
the set $\sigma(u)$ has at most $dn$ points.

\begin{lemma}\label{Lemma33}
Let $\tilde{u} \in \mathbb{C}^p.$ For sufficiently small $\delta > 0$ there exists a compact $K \subset \mathbb{C}$ such that
$$\sigma(u) \subset K \quad \textrm{ for all } \quad u \in \mathbb{B}(\tilde{u}, \delta).$$
\end{lemma}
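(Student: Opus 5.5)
The plan is to bound every eigenvalue by a Cauchy-type bound for monic matrix polynomials, and then use continuity of the coefficients to make that bound uniform on a ball around $\tilde u$.

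\emph{Step 1 (local bound on the coefficients).} By Lemma~\ref{Lemma31}, each $A_k$ with $k=0,\ldots,d-1$ is continuous near $\tilde u$. Fix $\delta>0$ small enough that Lemma~\ref{Lemma31} applies. Then
$$M:=\max_{k=0,\ldots,d-1}\ \sup_{u\in\mathbb{B}(\tilde u,\delta)}\|A_k(u)\|<+\infty.$$
This holds either because a continuous function is bounded on the compact ball, or directly from $\|A_k(u)\|\le \|A_k(\tilde u)\|+c\delta^\alpha$.

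\emph{Step 2 (eigenvalue bound for a monic polynomial).} Let $u\in\mathbb{B}(\tilde u,\delta)$ and $\lambda\in\sigma(u)$. Since $\det P_u(\lambda)=0$, there is a unit vector $x$ with $P_u(\lambda)x=0$. Because $A_d(u)=I$, this gives
$$\lambda^d x=-\sum_{k=0}^{d-1}A_k(u)\lambda^k x.$$
Taking norms (the matrix norm is subordinate) yields
$$|\lambda|^d\le M\sum_{k=0}^{d-1}|\lambda|^k.$$
If $|\lambda|>1$, the right-hand side is at most $M d|\lambda|^{d-1}$, so $|\lambda|\le dM$. Hence $|\lambda|\le R:=\max\{1,dM\}$ in every case.

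\emph{Step 3 (conclusion).} Set $K:=\{\lambda\in\mathbb{C}: |\lambda|\le R\}$. This set is compact, and by Step 2 it contains $\sigma(u)$ for every $u\in\mathbb{B}(\tilde u,\delta)$.

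The only point needing care is that the bound must be uniform in $u$. This comes from taking the supremum over the ball in Step 1, and it is where monicity is used: it lets us divide out the leading term without any invertibility issue. The argument contains no real obstacle.
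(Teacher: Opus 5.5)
Your proof is correct, but it takes a different route from the paper's.

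The paper works with the scalar polynomial $\det P_u(\lambda)$. It is monic of degree $dn$ with coefficients continuous in $u$. The paper then appeals to continuity of the roots of a monic polynomial in its coefficients, and concludes by compactness of $\mathbb{B}(\tilde u,\delta)$.

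You stay at the matrix level instead. You take an eigenvector normalized in the vector norm that induces the subordinate matrix norm, and use monicity to derive the Cauchy-type bound $|\lambda|^d\le M\sum_{k<d}|\lambda|^k$. This gives $|\lambda|\le\max\{1,dM\}$.

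Your route has several advantages:
\begin{itemize}
\item It is self-contained and quantitative, with an explicit radius in terms of $\sup_{\mathbb{B}(\tilde u,\delta)}\|A_k(u)\|$.
\item It works for every $\delta>0$, not only small ones. The appeal to Lemma~\ref{Lemma31} in your Step 1 is unnecessary, since continuity of $A_k$ on the compact ball already makes $M$ finite.
\item It avoids the paper's somewhat informal claim that the roots are ``continuous functions'' of $u$. That claim properly concerns the unordered root set, and its standard proof rests on a local bound on roots of exactly your type.
\end{itemize}

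The paper's route is shorter, and it reuses the determinant, which is needed anyway in Theorem~\ref{Theorem32}.

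Both arguments extend to the locally nonsingular setting of Remark~\ref{rem:theorem32}. In yours, replace $M$ by a bound on $\|A_d(u)^{-1}A_k(u)\|$ over the ball. In the paper's, the leading coefficient $\det A_d(u)$ of $\det P_u(\lambda)$ is bounded away from zero on the compact ball.
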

\begin{proof}
By definition, $\det P_u(\lambda) \in \mathbb{C}[\lambda]$ is a monic polynomial whose coefficients depend continuously on the parameter $u \in \mathbb{C}^p.$ Consequently, the roots of
the equation $\det P_u(\lambda) = 0$ are continuous functions in $u.$ Then the desired result follows since the closed ball $\mathbb{B}(\tilde{u}, \delta)$ is compact.
\end{proof}

\begin{theorem}\label{Theorem32}
Let $\tilde{u} \in \mathbb{C}^p.$ For sufficiently small $\delta > 0$ there exist constants $c > 0$ and $\alpha > 0$ such that
\begin{eqnarray*}
\sigma(u) &\subset& \sigma(u') + c \|u - u'\|^\alpha \mathbb{B} \quad \textrm{ for all } \quad u, u' \in \mathbb{B}(\tilde{u}, \delta).
\end{eqnarray*}
In particular, the spectrum set-valued map
$$\sigma \colon \mathbb{C}^p \rightrightarrows \mathbb{C}, \quad u \mapsto \sigma(u),$$
is H\"older continuous of degree $\alpha$ at $\tilde{u}.$
\end{theorem}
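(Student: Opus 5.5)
The plan is to reduce the statement to a root-perturbation estimate for the scalar monic polynomial $f_u(\lambda) := \det P_u(\lambda)$, which has degree $N := dn$ in $\lambda$. Fix $\delta>0$ small enough for Lemma~\ref{Lemma32} and Lemma~\ref{Lemma33} to apply. Lemma~\ref{Lemma33} then provides a compact $K \subset \mathbb{C}$ with $\sigma(u)\subset K$ for all $u \in \mathbb{B}(\tilde u,\delta)$. Enlarge $K$ to a closed disc $K'$ of radius $R$ containing it.

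\textbf{Step 1: control $f_u - f_{u'}$ on $K'$.} The entries of $P_u(\lambda)$ are bounded uniformly for $u \in \mathbb{B}(\tilde u,\delta)$ and $\lambda\in K'$, by continuity and compactness. The determinant is a polynomial in the entries, hence locally Lipschitz. Combining this with Lemma~\ref{Lemma32} (applied to the compact $K'$) gives constants $c_1>0$ and $\alpha_1>0$ such that
$$|f_u(\lambda)-f_{u'}(\lambda)| \le c_1\|u-u'\|^{\alpha_1} \quad \textrm{for all } \lambda\in K',\ u,u'\in\mathbb{B}(\tilde u,\delta).$$
Alternatively, each coefficient of $f_u$ is a continuous semialgebraic function of $u$, so Proposition~\ref{LojasiewiczInequality} applies to them directly.

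\textbf{Step 2: factorization argument.} Take $\lambda_0\in\sigma(u)$, so $f_u(\lambda_0)=0$ and $\lambda_0\in K\subset K'$. Since $f_{u'}$ is monic of degree $N$, we can write $f_{u'}(\lambda)=\prod_{j=1}^N(\lambda-\mu_j)$, where $\mu_1,\dots,\mu_N$ are the roots of $f_{u'}$ counted with multiplicity, all lying in $\sigma(u')$. Then
$$\prod_{j=1}^N|\lambda_0-\mu_j| = |f_{u'}(\lambda_0)| = |f_{u'}(\lambda_0)-f_u(\lambda_0)| \le c_1\|u-u'\|^{\alpha_1}.$$
It follows that $\min_j|\lambda_0-\mu_j|\le \bigl(c_1\|u-u'\|^{\alpha_1}\bigr)^{1/N}$. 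Setting $c:=c_1^{1/N}$ and $\alpha:=\alpha_1/N$, we obtain $\lambda_0\in\sigma(u')+c\|u-u'\|^{\alpha}\mathbb{B}$, which is the required inclusion. H\"older continuity at $\tilde u$ follows by taking $U$ to be the open ball of radius $\delta$.

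\textbf{Main obstacle.} The only delicate point is the uniformity in Step 1: the determinant estimate must hold with constants independent of $\lambda$ and $u$. This is why we must evaluate at points of a fixed compact set, which is exactly what Lemma~\ref{Lemma33} guarantees for $\lambda_0$. Monicity is used in Step 2, since it ensures that $f_{u'}$ has exactly $N$ roots and factors with leading coefficient $1$. Without monicity, a small leading coefficient would spoil the bound.
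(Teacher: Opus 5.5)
Your proposal is correct and follows essentially the same route as the paper. It confines $\sigma(u)$ to a fixed compact set via Lemma~\ref{Lemma33}, bounds $|\det P_u(\lambda)-\det P_{u'}(\lambda)|$ there by a power of $\|u-u'\|$ using Lemma~\ref{Lemma32}, and then uses the factorization of the monic polynomial $\det P_{u'}$ to get $\mathrm{dist}(\lambda,\sigma(u'))^{dn}\le|\det P_{u'}(\lambda)|$. The only cosmetic difference is in the determinant step: the paper uses the explicit bound $|\det A-\det B|\le n\max\{\|A\|,\|B\|\}^{n-1}\|A-B\|$, whereas you use Lipschitz continuity of $\det$ on a bounded set, which amounts to the same thing.
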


\begin{proof}
Fix a real number $\delta > 0.$ By Lemma~\ref{Lemma33}, there exists a compact set $K \subset \mathbb{C}$ such that
$$\sigma(u) \subset K \quad \textrm{ for all } \quad u \in \mathbb{B}(\tilde{u}, \delta).$$
Thanks to Lemma~\ref{Lemma32}, we can find constants $c_0 > 0$ and $\alpha_0 > 0$ such that
\begin{eqnarray} \label{inequality1}
\| P_{u}(\lambda) - P_{u'}(\lambda) \| & \le & c_0 \|u - u'\|^{\alpha_0} 
\end{eqnarray}
for all $ \lambda \in K $  and all  $ u, u' \in \mathbb{B}(\tilde{u},\delta)$.\\
Furthermore, since $P_u(\lambda)$ is a polynomial matrix in $\lambda$ whose coefficients are continuous functions in $u$ and the sets $K$ and $\mathbb{B}(\tilde{u}, \delta)$ are compact, there exists a constant $c_1 > 0$ such that
\begin{eqnarray} \label{inequality2}
\| P_{u}(\lambda) \| & \le & c_1 \quad \textrm{ for all } \quad \lambda \in K \textrm{ and all } u \in \mathbb{B}(\tilde{u}, \delta).
\end{eqnarray}

Take any $u, u' \in \mathbb{B}(\tilde{u}, \delta)$ and any $\lambda \in \sigma(u).$ Since $\sigma(u')$ is the set of roots of the monic polynomial $\det P_{u'}(z) \in \mathbb C[z]$ whose degree is $dn$, it is easy to see that\footnote{More generally, for a polynomial $p\in \mathbb C[z]$ of degree $d,$ the following inequality holds
$$  |a_d|\mathrm{dist}(z, p^{-1}(0))^{d}\leq  |p(z)| \quad \textrm{ for all } \quad z \in \mathbb C, $$
where $a_d$ denotes the coefficient of the term of highest degree of $p.$ To see this, we write
$p(z) = a_d \prod_{k = 1}^d (z - z_k),$ where $z_1, \ldots, z_d$ are (not necessarily distinct) roots of $p.$ Then for any $z\in \mathbb C,$ we have
$$ |p(z)| = |a_d| \prod_{k = 1}^d |z - z_k| \geq |a_d| \prod_{k = 1}^d \min_{j = 1, \cdots, d} |z - z_j| = |a_d| \mathrm{dist}(z, p^{-1}(0))^d. $$}
\begin{eqnarray*}
\mathrm{dist}(z, \sigma(u'))^{dn} & \leq & |\det P_{u'}(z)| \quad \textrm{ for all } \quad z \in \mathbb C,
\end{eqnarray*}
where $\mathrm{dist}(\cdot, \cdot)$ stands for the distance function to a set. In particular,
\begin{equation} \label{inequality3}
 \mathrm{dist}(\lambda, \sigma(u'))^{dn} \leq |\det P_{u'}(\lambda)|.
 \end{equation}
But $\det P_{u}(\lambda) = 0$ because of $\lambda \in \sigma(u).$ Hence
\begin{eqnarray*}
|\det P_{u'}(\lambda)|
& = & |\det P_{u}(\lambda) - \det P_{u'}(\lambda)| \\
& \leq & n \max\{  \|P_{u}(\lambda)\|, \|P_{u'}(\lambda)\|\}^{n - 1}\cdot \|P_{u}(\lambda) - P_{ u'}(\lambda)\|,
\end{eqnarray*}
where the inequality follows from \cite[Corollary~20.2]{Bhatia2007}). Consequently, we deduce from the inequalities (\ref{inequality1}), (\ref{inequality2}) and (\ref{inequality3}) that
\begin{eqnarray*}
\mathrm{dist}(\lambda, \sigma(u'))^{dn} & \leq & n c_1^{n - 1} c_0 \|u - u'\|^{\alpha_0},
\end{eqnarray*}
and then the theorem follows easily.
\end{proof}

\begin{remark} \label{rem:theorem32} \rm Theorem \ref{Theorem32} is still true if we replace the hypothesis on monicity of the matrix polynomial $P_u(\lambda)$ by the locally non-singularity of the leading coefficient matrix $A_d(u)$ as follows.

\textit{For each $\tilde{u}\in \mathbb C^p$, there exists $\delta > 0$ such that $\det A_d(u) \neq 0$ 
for all $u \in \mathbb B(\tilde{u}, \delta)$.}

\end{remark}

\subsection{The continuity of the $\epsilon$-pseudospectrum set-valued map}

For a given parameter $u \in \mathbb{C}^p$ and a given $\epsilon \ge 0,$ the {\em $\epsilon$-pseudospectrum} of $P_{u}(\lambda)$ is defined to be
\begin{eqnarray*}
\Lambda_\epsilon(u) &:=& \left\{ \lambda \in \mathbb{C} \ | \ \det P_{v}(\lambda) = 0 \mbox{ for some } v \in \mathbb{B}(u, \epsilon) \right\}.
\end{eqnarray*}
By definition, $\Lambda_\epsilon(u)$ is the image of the semialgebraic set
\begin{eqnarray*}
\left\{ (\lambda, v) \in \mathbb{C} \times \mathbb{C}^p  \ | \ \det P_{v}(\lambda) = 0, \|v - u\| \le \epsilon \right\}
\end{eqnarray*}
via the projection map $\mathbb{C} \times \mathbb{C}^p \rightarrow \mathbb{C}, (\lambda, v) \mapsto \lambda,$ and so the $\epsilon$-pseudospectrum
$\Lambda_\epsilon(u)$ is a semialgebraic set because of the Tarski--Seidenberg theorem.

\begin{theorem}\label{thr:epsilon-pseudospectrum}
Let $\tilde{u} \in \mathbb{C}^p$ and $\tilde{\epsilon}, \delta > 0$ be given. Then there exist constants $c > 0$ and $\alpha > 0$ such that
\begin{eqnarray*}
\Lambda_{\epsilon}(u)  \subset \Lambda_{\epsilon'}(u') + c \big(\|u-u'\|^\alpha + |\epsilon - \epsilon'|^\alpha\big)\mathbb{B}
\end{eqnarray*}
for all $u, u' \in \mathbb B(\tilde{u},\delta)$ and all $\epsilon, \epsilon' \in [0, \tilde{\epsilon}].$
In particular, the $\epsilon$-pseudospectrum set-valued map
$$\Lambda_{\epsilon} \colon \mathbb{C}^p \rightrightarrows \mathbb{C}, \quad u \mapsto \Lambda_{\epsilon}(u),$$
is H\"older continuous of degree $\alpha$ at $\tilde{u}.$
\end{theorem}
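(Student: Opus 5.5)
Take $\lambda \in \Lambda_\epsilon(u)$. By definition there is $v \in \mathbb{B}(u,\epsilon)$ with $\det P_v(\lambda)=0$, so $\lambda \in \sigma(v)$. The plan is to produce a nearby parameter $v' \in \mathbb{B}(u',\epsilon')$, with $\|v-v'\| \le \|u-u'\| + |\epsilon-\epsilon'|$, and then transfer $\lambda$ to a point of $\sigma(v') \subset \Lambda_{\epsilon'}(u')$ by means of Theorem~\ref{Theorem32}.

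\textbf{Constructing $v'$.} If $\|v-u'\| \le \epsilon'$, take $v'=v$. Otherwise, let $v'$ be the point on the segment from $u'$ to $v$ at distance $\epsilon'$ from $u'$. Then
\[
\|v-v'\| = \|v-u'\|-\epsilon' \le \|v-u\|+\|u-u'\|-\epsilon' \le \|u-u'\| + (\epsilon-\epsilon')_+ ,
\]
which is the required bound in both cases. In particular, all parameters involved lie in the fixed compact ball $\mathbb{B}(\tilde u, \delta+\tilde\epsilon)$.

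\textbf{Applying Theorem~\ref{Theorem32} uniformly.} That theorem is local: it gives constants on a small ball around each point. Since $\mathbb{B}(\tilde u,\delta+\tilde\epsilon)$ is compact, I would upgrade it to a uniform statement. The cleanest route is to repeat its proof directly on this ball, with no covering argument needed:
\begin{itemize}
\item Lemma~\ref{Lemma33}'s argument gives a compact $K$ containing $\sigma(w)$ for all $w$ in the ball, since the roots of a monic polynomial are bounded in terms of its coefficients, and these are continuous in $w$.
\item Proposition~\ref{LojasiewiczInequality} applies to any compact set, so Lemma~\ref{Lemma32} holds on the big ball with constants $c_0,\alpha_0$.
\end{itemize}
The estimate $\mathrm{dist}(\lambda,\sigma(v'))^{dn} \le n c_1^{n-1} c_0 \|v-v'\|^{\alpha_0}$ then follows exactly as in the proof of Theorem~\ref{Theorem32}. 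Combining this with the bound on $\|v-v'\|$ and the inequality $(a+b)^\beta \le 2^\beta\max(a,b)^\beta \le 2^\beta(a^\beta+b^\beta)$, with $\beta=\alpha_0/(dn)$, gives
\[
\mathrm{dist}(\lambda,\Lambda_{\epsilon'}(u')) \le c\big(\|u-u'\|^\alpha+|\epsilon-\epsilon'|^\alpha\big), \qquad \alpha=\beta .
\]
Since $\Lambda_{\epsilon'}(u')$ is closed (it is the projection of a compact set), the distance is attained. This yields the inclusion.

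\textbf{The particular case and the main obstacle.} Setting $\epsilon=\epsilon'$ gives the Hölder continuity of $u\mapsto\Lambda_\epsilon(u)$ at $\tilde u$. The main obstacle is the uniformity step: Theorem~\ref{Theorem32} is stated only for small $\delta$, while the parameters here range over a ball of radius $\delta+\tilde\epsilon$. Redoing its proof on the larger compact ball, rather than invoking the theorem itself, is what resolves this.
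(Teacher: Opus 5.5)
Your proof is correct and is essentially the paper's argument. The paper also projects $v$ radially onto the relevant ball and applies Theorem~\ref{Theorem32} on the enlarged ball $\mathbb{B}(\tilde u,\tilde\epsilon+\delta)$, but it splits your single projection into two steps (Claim~\ref{claim1} moves $\epsilon$ to $\epsilon'$ at fixed $u$, Claim~\ref{claim2} moves $u$ to $u'$ at fixed $\epsilon'$) and adds the two bounds, which avoids your factor $2^\beta$; your re-derivation of Theorem~\ref{Theorem32} on the larger ball makes explicit a point the paper passes over, since it invokes that theorem (stated for small $\delta$) on a ball of radius $\tilde\epsilon+\delta$.
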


\begin{proof}
By Theorem \ref{Theorem32}, there exist constants $c > 0$ and $\alpha  > 0$ such that
\begin{equation} \label{equ-pseudo}
 \sigma(v) \subset  \sigma({v'}) + c \|v-{v'}\|^\alpha \mathbb{B}
 \end{equation}
for all $  v, {v'} \in \mathbb B(\tilde{u}, \tilde{\epsilon} + \delta).$ Take any $u, u' \in \mathbb B(\tilde{u},\delta)$ and any $\epsilon, \epsilon' \in [0, \tilde{\epsilon}].$ Then the desired statement follows immediately from Claims~\ref{claim1} and \ref{claim2} below.
\end{proof}

\begin{claim} \label{claim1}
We have
$$\Lambda_{\epsilon}(u)  \subset \Lambda_{\epsilon'}(u) + c  |\epsilon-\epsilon'|^\alpha \mathbb{B}.$$
\end{claim}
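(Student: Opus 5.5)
The idea is to move the perturbing parameter radially toward $u$ and then apply inequality \eqref{equ-pseudo}. Take any $\lambda \in \Lambda_\epsilon(u)$. Then there is $v \in \mathbb{B}(u,\epsilon)$ with $\det P_v(\lambda)=0$, that is, $\lambda \in \sigma(v)$.

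If $\epsilon \le \epsilon'$, then $\mathbb{B}(u,\epsilon)\subset \mathbb{B}(u,\epsilon')$. Hence $\Lambda_\epsilon(u)\subset\Lambda_{\epsilon'}(u)$ and there is nothing to prove.

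So assume $\epsilon > \epsilon'$. If $\|v-u\|\le \epsilon'$, then $\lambda\in\Lambda_{\epsilon'}(u)$ directly. Otherwise set
\[
v' := u + \frac{\epsilon'}{\|v-u\|}(v-u).
\]
This point satisfies $\|v'-u\| = \epsilon'$, so $v'\in\mathbb{B}(u,\epsilon')$. It also satisfies
\[
\|v-v'\| = \|v-u\|-\epsilon' \le \epsilon-\epsilon' = |\epsilon-\epsilon'|.
\]
Both $v$ and $v'$ lie in $\mathbb{B}(\tilde u,\tilde\epsilon+\delta)$, because $\|v-\tilde u\|\le \|v-u\|+\|u-\tilde u\|\le \tilde\epsilon+\delta$, and similarly for $v'$. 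Therefore \eqref{equ-pseudo} applies and gives
\[
\lambda\in\sigma(v)\subset \sigma(v')+c\|v-v'\|^\alpha\mathbb{B}\subset \sigma(v') + c|\epsilon-\epsilon'|^\alpha\mathbb{B}.
\]
This last inclusion uses $\alpha>0$, so that $t\mapsto t^\alpha$ is monotone.

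Finally, $v'\in\mathbb{B}(u,\epsilon')$ gives $\sigma(v')\subset\Lambda_{\epsilon'}(u)$, and the claim follows.

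The only step needing care is keeping all auxiliary parameters inside the ball $\mathbb{B}(\tilde u,\tilde\epsilon+\delta)$ on which \eqref{equ-pseudo} holds. This is exactly why that radius was chosen, and the radial construction of $v'$ keeps it there.
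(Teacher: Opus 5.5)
Your proposal is correct and follows the paper's proof essentially verbatim: the same trivial case $\epsilon\le\epsilon'$, the same radial projection of $v$ onto the sphere of radius $\epsilon'$ about $u$ (which you write out explicitly), the same check that $v,v'\in\mathbb{B}(\tilde u,\tilde\epsilon+\delta)$, and the same application of \eqref{equ-pseudo}. Nothing is missing.
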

\begin{proof}
If $\epsilon \leq \epsilon',$ then $\Lambda_\epsilon(u) \subset \Lambda_{\epsilon'}(u),$ which implies the claim. So we assume that $\epsilon >  \epsilon'.$

Take any $\lambda \in \Lambda_\epsilon({u}).$ By definition, there exists $v \in \mathbb B({u}, \epsilon)$ such that $\lambda \in \sigma(v).$  If $v\in \mathbb B(u, \epsilon')$, then $\lambda \in \Lambda_{\epsilon'}(u),$ and so there is nothing to prove. Therefore we assume that $v \not \in \mathbb B(u, \epsilon')$.  Let  $v'$ be the intersection point of the segment joining $u$ and $v$ with the sphere centered at $u$ of radius $\epsilon'.$ Then $v' \in \mathbb B({u}, \epsilon'),$ and hence $\sigma(v')\subset \Lambda_{\epsilon'}({u}).$  Moreover, it is clear that
\begin{eqnarray*}
\|v- v'\| & = & \|v - u\| - \|u - v'\| \ = \ \|v - u\| - \epsilon' \ \leq \ \epsilon - \epsilon'.
\end{eqnarray*}
On the other hand, we have $v, v'\in \mathbb B(\tilde{u}, \tilde{\epsilon} + \delta)$ because the following inequalities hold
\begin{eqnarray*}
\|v - \tilde{u}\| & \leq & \|v - u\| + \|u - \tilde{u}\| \ \leq \ \epsilon + \delta \ \le \ \tilde{\epsilon} + \delta, \\
\|v' - \tilde{u}\| & \leq & \|v' - u\| + \|u - \tilde{u}\| \ \leq \ \epsilon' + \delta \ \le \ \tilde{\epsilon} + \delta.
\end{eqnarray*}
Therefore, it follows from  \eqref{equ-pseudo} that
\begin{eqnarray*}
\mbox{dist}(\lambda, \Lambda_{\epsilon'}({u})) & \leq & \mbox{dist}(\lambda, \sigma(v')) \ \leq \ c \|v - v'\|^\alpha \ \leq \ c (\epsilon - \epsilon')^\alpha.
\end{eqnarray*}
Since $\lambda$ is arbitrary in $\Lambda_\epsilon({u})$, we get the required inclusion.
\end{proof}

\begin{claim} \label{claim2}
The following inclusion holds
$$\Lambda_{\epsilon'}(u)  \subset \Lambda_{\epsilon'}(u') + c  \|u-u'\|^\alpha \mathbb{B}.$$
\end{claim}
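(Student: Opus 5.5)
We prove Claim~\ref{claim2} by translating parameters. Take any $\lambda \in \Lambda_{\epsilon'}(u)$. By definition there is $v \in \mathbb{B}(u, \epsilon')$ with $\lambda \in \sigma(v)$. The natural companion point for $u'$ is the shifted parameter
$$v' := v + (u' - u).$$
Then $\|v' - u'\| = \|v - u\| \le \epsilon'$, so $v' \in \mathbb{B}(u', \epsilon')$. Consequently $\sigma(v') \subset \Lambda_{\epsilon'}(u')$. Moreover $\|v - v'\| = \|u - u'\|$.

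Next we check that both $v$ and $v'$ lie in the ball where \eqref{equ-pseudo} is available. We have
$$\|v - \tilde u\| \le \|v-u\| + \|u-\tilde u\| \le \epsilon' + \delta \le \tilde\epsilon + \delta,$$
and likewise $\|v' - \tilde u\| \le \|v'-u'\| + \|u'-\tilde u\| \le \tilde\epsilon + \delta$. Hence \eqref{equ-pseudo} applies to the pair $v, v'$. It gives
$$\mathrm{dist}(\lambda, \Lambda_{\epsilon'}(u')) \le \mathrm{dist}(\lambda, \sigma(v')) \le c\|v - v'\|^\alpha = c\|u-u'\|^\alpha.$$
Since $\lambda \in \Lambda_{\epsilon'}(u)$ was arbitrary, the inclusion follows.

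One point needs care: the distance must be attained, so that $\lambda$ really lies in $\sigma(v') + c\|u-u'\|^\alpha\mathbb{B}$. This holds because $\sigma(v')$ is finite. It is also nonempty, since $\det P_{v'}$ is a monic polynomial of degree $dn \ge 1$.

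There is no real obstacle here. The only thing to get right is the radius of the ball in \eqref{equ-pseudo}, which was chosen as $\tilde\epsilon + \delta$ precisely so that translated parameters stay inside it.

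Finally, combining Claims~\ref{claim1} and \ref{claim2} gives
$$\Lambda_\epsilon(u) \subset \Lambda_{\epsilon'}(u) + c|\epsilon-\epsilon'|^\alpha\mathbb{B} \subset \Lambda_{\epsilon'}(u') + c\big(\|u-u'\|^\alpha + |\epsilon-\epsilon'|^\alpha\big)\mathbb{B}.$$
This completes the theorem.
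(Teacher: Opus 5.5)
Your proof is correct and follows the paper's argument: pick $v'\in\mathbb{B}(u',\epsilon')$ with $\|v-v'\|\le\|u-u'\|$, check that $v,v'\in\mathbb{B}(\tilde u,\tilde\epsilon+\delta)$, and apply \eqref{equ-pseudo}. The only difference is the choice of $v'$: you translate ($v'=v+u'-u$), while the paper projects $v$ radially onto the sphere of radius $\epsilon'$ about $u'$; your choice removes the case split and gives the same bound.
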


\begin{proof}
Let us take any  $\lambda \in \Lambda_{\epsilon'}(u)$. By definition, there exists $v \in \mathbb B(u, {\epsilon'})$ such that $\lambda \in \sigma(v)$.  If $v \in \mathbb B(u',{\epsilon'})$, then $\sigma(v)\subset \Lambda_{\epsilon'}(u'),$ which implies that  $\lambda \in \Lambda_{\epsilon'}(u'),$ and there is nothing to prove. Therefore we may assume that $v \not \in \mathbb B(u', {\epsilon'})$.   Let ${v'}$ be the intersection point of the segment joining $v$ and $u'$ with the sphere centered at $u'$ of radius ${\epsilon'}.$  Then $\sigma({v'})\subset \Lambda_{\epsilon'}(u')$, and we have
\begin{eqnarray*}
\|v-{v'}\|  & =  & \|v-u'\| -  \|u'-{v'}\| \\
 & \leq  &  (\|v-u\| + \|u-u'\|) - {\epsilon'} \\
&    \leq  & ({\epsilon'} +  \|u-u'\|) - {\epsilon'} \  =  \  \|u - u'\|.
\end{eqnarray*}
On the other hand, we have $v, {v'} \in \mathbb{B}(\tilde{u}, \tilde{\epsilon} + \delta)$ because it holds that
\begin{eqnarray*}
\|v - \tilde{u}\|  & \le  & \|v - u\| + \|u - \tilde{u}\| \ \le \ {\epsilon'} + \delta \ \le \ \tilde{\epsilon} + \delta, \\
\|{v'} - \tilde{u}\|  & \le  & \|{v'} - u'\| + \|u' - \tilde{u}\| \ \le \ {\epsilon'} + \delta \ \le \ \tilde{\epsilon} + \delta.
\end{eqnarray*}
Therefore, we deduce from \eqref{equ-pseudo} that
\begin{eqnarray*}
\mbox{dist}(\lambda, \Lambda_{\epsilon'}(u')) & \leq & \mbox{dist}(\lambda, \sigma({v'})) \ \leq \ c \|v - {v'}\|^\alpha \ \leq \ c \|u - u'\|^\alpha.
\end{eqnarray*}
Since $\lambda$ is arbitrary in $\Lambda_{\epsilon'}(u),$ the claim follows.
\end{proof}

If the matrix polynomial $P_{u}(\lambda)$ is real (i.e., all its coefficients are real) or Hermitian (i.e., all of its coefficients are Hermitian), then it is well-known that the spectrum of $P_{u}(\lambda)$ is symmetric with respect to the real axis. For the $\epsilon$-pseudospectra of matrix polynomials we have a similar result; see also \cite[Proposition~2.1]{Lancaster2005}.  The proof of this result is straightforward, hence we omit here.

\begin{proposition} \label{Proposition31}
Let ${u} \in \mathbb{C}^p$ and $\epsilon > 0.$ Assume that for all $v \in \mathbb{B}({u}, \epsilon),$ the matrices $A_k(v), ~ k = 0, \ldots, d, $ are real or Hermitian. Then the $\epsilon$-pseudospectrum $\Lambda_\epsilon({u})$ is symmetric with respect to the real axis.
\end{proposition}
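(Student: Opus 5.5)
The plan is to show that $\lambda \in \Lambda_\epsilon(u)$ forces $\bar\lambda \in \Lambda_\epsilon(u)$; applying this to $\bar\lambda$ then gives the reverse inclusion. Fix $\lambda \in \Lambda_\epsilon(u)$. By definition there is $v \in \mathbb{B}(u,\epsilon)$ with $\det P_v(\lambda)=0$. The key point is that the witness $v$ can be kept: the hypothesis is imposed on every $v$ in the ball, so we only need to prove $\det P_v(\bar\lambda)=0$ for this same $v$. That reduces the statement to the classical symmetry of the spectrum of a single real or Hermitian matrix polynomial, namely $\sigma(v)=\overline{\sigma(v)}$.

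For the single polynomial $P_v$, consider the two cases separately.

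\emph{Real case.} All $A_k(v)$ are real, so $\det P_v(\lambda)$ is a polynomial in $\lambda$ with real coefficients. Hence
\[
\det P_v(\bar\lambda)=\overline{\det P_v(\lambda)}=0 .
\]

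\emph{Hermitian case.} Taking conjugate transposes gives
\[
P_v(\lambda)^* = \sum_k A_k(v)^*\,\bar\lambda^k = \sum_k A_k(v)\,\bar\lambda^k = P_v(\bar\lambda).
\]
Therefore
\[
\det P_v(\bar\lambda)=\det\big(P_v(\lambda)^*\big)=\overline{\det P_v(\lambda)}=0 .
\]

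In either case $\bar\lambda\in\sigma(v)\subset\Lambda_\epsilon(u)$, since $v\in\mathbb{B}(u,\epsilon)$. This gives $\overline{\Lambda_\epsilon(u)}\subset\Lambda_\epsilon(u)$, and conjugating this inclusion yields equality.

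There is no real obstacle. The only subtlety is reading ``real or Hermitian'' pointwise in $v$: the dichotomy may depend on $v$, but the argument is carried out for each fixed witness $v$, so that is harmless. Monicity is not needed here.
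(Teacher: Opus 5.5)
Your proof is correct, and it is the argument the paper intends when it calls the proof ``straightforward'' and omits it (pointing to \cite[Proposition~2.1]{Lancaster2005}). You keep the same witness $v\in\mathbb{B}(u,\epsilon)$ and use $\overline{P_v(\lambda)}=P_v(\bar\lambda)$ in the real case, or $P_v(\lambda)^*=P_v(\bar\lambda)$ in the Hermitian case, to get $\bar\lambda\in\sigma(v)\subset\Lambda_\epsilon(u)$. Your pointwise reading of the hypothesis (for each $v$, all $A_k(v)$ real or all Hermitian) is the right one, since a mixture of real and Hermitian coefficients would not force $\sigma(v)$ to be symmetric.
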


 The next result is a structured analogue of the corresponding results in \cite[Theorem 2.3]{Lancaster2005} and \cite[Theorem~2]{Mosier1986}, obtained under parameter-dependent perturbations. The proofs follow the same arguments as in the classical case of standard pseudospectra; see, e.g. \cite[Theorem 2.3]{Lancaster2005}.
\begin{proposition}
Let ${u} \in \mathbb{C}^p$ and $\epsilon > 0.$ Then the $\epsilon$-pseudospectrum $\Lambda_\epsilon({u})$ has at most $dn $ connected components, and for any $v \in \mathbb{B}({u}, \epsilon),$ $P_{v}(\lambda)$ has an eigenvalue in each of these components. Furthermore, $P_{{u}}(\lambda)$ and $P_{v}(\lambda)$ have the same number of eigenvalues (counting multiplicities) in each connected component of $\Lambda_\epsilon({u}).$
\end{proposition}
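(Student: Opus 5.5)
The plan is to run a homotopy argument inside the ball $\mathbb{B}(u,\epsilon)$ and exploit that it is convex. Since $\Lambda_\epsilon(u)=\bigcup_{v\in\mathbb{B}(u,\epsilon)}\sigma(v)$, every point of the pseudospectrum lies on some eigenvalue branch of some $P_v$. The second and third assertions (an eigenvalue of $P_v$ in every component, and equal counts in each component) give the first one immediately: if $\Lambda_\epsilon(u)$ had more than $dn$ components, then $P_u$, which has exactly $dn$ eigenvalues counted with multiplicity because $\det P_u$ is monic of degree $dn$, could not have one in each. So I will concentrate on the counting statement.

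Fix $v\in\mathbb{B}(u,\epsilon)$ and set $w(t)=(1-t)u+tv$ for $t\in[0,1]$, so $w(t)\in\mathbb{B}(u,\epsilon)$ and $\sigma(w(t))\subset\Lambda_\epsilon(u)$ for all $t$. The polynomial $q_t(\lambda):=\det P_{w(t)}(\lambda)$ is monic of degree $dn$, and its coefficients depend continuously on $t$ because the $A_k$ are continuous. By continuity of roots of monic polynomials (the fact behind Lemma~\ref{Lemma33}, or quantitatively Theorem~\ref{Theorem32}), the unordered $dn$-tuple of roots of $q_t$ moves continuously in $t$. Equivalently, one can pick continuous functions $\lambda_1(t),\dots,\lambda_{dn}(t)$ listing the roots of $q_t$ with multiplicity; this is a standard selection result for continuous families of monic polynomials over an interval. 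Each path $t\mapsto\lambda_j(t)$ is connected and stays inside $\Lambda_\epsilon(u)$, so it stays in one connected component. Hence $\lambda_j(0)\in\sigma(u)$ and $\lambda_j(1)\in\sigma(v)$ lie in the same component, and the multiset of eigenvalues in each component is the same at $t=0$ and at $t=1$.

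It remains to show that every component $C$ contains an eigenvalue of $P_v$. Take $\mu\in C$. Then $\mu\in\sigma(v_0)$ for some $v_0\in\mathbb{B}(u,\epsilon)$, so $P_{v_0}$ has an eigenvalue in $C$. By the counting argument applied to $v_0$, $P_u$ also has an eigenvalue in $C$, and applying it once more to $v$ shows that $P_v$ has one as well.

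The main obstacle is making the counting rigorous without choosing global continuous root branches. As an alternative, I would consider the component count $N_C(t):=\#\{\text{roots of }q_t\text{ in }C\}$. Components of a semialgebraic set are finite in number and each is closed in $\Lambda_\epsilon(u)$, which is compact by Lemma~\ref{Lemma33}. So the distinct components lie at positive distance from one another. For $s$ near $t$, every root of $q_s$ lies within a small distance of a root of $q_t$ by Theorem~\ref{Theorem32}, and by Rouché/Hurwitz the multiplicities are preserved in small discs. Hence $N_C$ is locally constant on $[0,1]$, and therefore constant.
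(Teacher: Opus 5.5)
Your proposal is correct and follows essentially the route the paper points to, namely the classical homotopy argument of Lancaster--Psarrakos. Convexity of $\mathbb{B}(u,\epsilon)$ lets you join $u$ to $v$ by a segment, and the $dn$ roots of the monic polynomial $\det P_{w(t)}$ then move continuously inside $\Lambda_\epsilon(u)$. This yields equal eigenvalue counts in each component, hence an eigenvalue of every $P_v$ in each component and at most $dn$ components. The only slip is verbal: what agrees at $t=0$ and $t=1$ is the \emph{number} of eigenvalues (with multiplicity) in each component, not the multiset itself.
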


The next statement shows that the boundary of the $\epsilon$-pseudospectrum is made up of algebraic curves. This is a comforting property in the sense that the number of difficult points, such as cusps or self-intersections, is limited. For related results, see also \cite{Boulton2008, Lancaster2005, Tisseur2001-1}.

\begin{proposition}
For each $\epsilon > 0,$ the boundary $\partial \Lambda_\epsilon({u})$ of the $\epsilon$-pseudospectrum $\Lambda_\epsilon({u})$ lies on an algebraic curve. In particular, $\partial \Lambda_\epsilon({u})$ has at most a finite number of singularities where the tangent fails to exist, and it intersects itself only at a finite number of points.
\end{proposition}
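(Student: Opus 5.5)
The plan is to argue purely semialgebraically: show that $\partial\Lambda_\epsilon(u)$ is a semialgebraic subset of $\mathbb{C}\cong\mathbb{R}^2$ of dimension at most one, and then place it inside the zero set of a single nonzero real polynomial $Q(x,y)$ with $\lambda=x+iy$.

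First, I would recall that $\Lambda_\epsilon(u)$ is semialgebraic by the Tarski--Seidenberg theorem, as noted before Theorem~\ref{thr:epsilon-pseudospectrum}. It is also compact. It is bounded because $\sigma(v)\subset K$ for all $v\in\mathbb{B}(u,\epsilon)$, which follows from Lemma~\ref{Lemma33} together with compactness of the ball. It is closed because it is the projection of the closed set $\{(\lambda,v): \det P_v(\lambda)=0,\ \|v-u\|\le\epsilon\}$, which lies in the compact set $K\times\mathbb{B}(u,\epsilon)$. Since $\dim\Lambda_\epsilon(u)\le 2$, Proposition~\ref{DimensionProposition} gives that $\partial\Lambda_\epsilon(u)$ is semialgebraic with $\dim\partial\Lambda_\epsilon(u)\le 1$. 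If this dimension is $0$, the boundary is a finite set and everything is trivial, so I will assume it equals $1$.

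Next, I would use the standard fact (e.g.\ \cite{Bochnak1998}) that a semialgebraic set and its Zariski closure have the same dimension. The Zariski closure $Z$ of $\partial\Lambda_\epsilon(u)$ is then a real algebraic subset of $\mathbb{R}^2$ of dimension $1$, hence a proper algebraic set. So $Z=\{f_1=\cdots=f_k=0\}$ with some $f_i\not\equiv 0$, and $\partial\Lambda_\epsilon(u)\subset\{f_i=0\}$. I would replace $f_i$ by its squarefree part $Q$, which does not change the zero set. This already proves that the boundary lies on an algebraic curve.

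For the second assertion, the key step, and the one I expect to be the main technical point, is that the singular set $\{Q=Q_x=Q_y=0\}$ is finite. I would argue as follows. Since $Q$ is squarefree, after a generic linear change of coordinates making $Q$ monic in $y$, the polynomials $Q$ and $Q_y$ are coprime in $\mathbb{R}(x)[y]$. Hence their resultant in $y$ is a nonzero polynomial in $x$, so only finitely many $x$-values, and therefore only finitely many points, are common zeros; equivalently one may invoke B\'ezout's theorem. Away from these points, the implicit function theorem shows that $\{Q=0\}$ is locally a smooth analytic arc, so it has a tangent and cannot cross itself.

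Finally, I would apply the cell decomposition to the one-dimensional semialgebraic set $\partial\Lambda_\epsilon(u)$, writing it as finitely many analytic arcs and points. Along the interior of each arc lying in the smooth part of $\{Q=0\}$, the tangent exists and there are no self-intersections. The exceptional points are contained in the union of the singular points of $Q$, the finitely many $0$-cells, and the arc endpoints, which is a finite set. This yields both the finiteness of the points where the tangent fails and the finiteness of the self-intersection points.
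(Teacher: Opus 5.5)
Your proof is correct and follows the paper's route. Semialgebraicity of $\partial\Lambda_\epsilon(u)$ together with $\dim\partial\Lambda_\epsilon(u)\le 1$ places the boundary on an algebraic curve. A finite partition into points and analytic arcs, which the paper simply quotes from \cite[Theorem~1.1]{HP2017}, then gives the finiteness assertions.

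You make explicit two steps the paper leaves implicit: the Zariski-closure dimension fact, and the finiteness of the singular locus of a squarefree $Q$ via the resultant. The second is actually redundant. The $1$-dimensional analytic cells already have tangents and no self-crossings away from the finitely many $0$-cells and arc endpoints.

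One small correction to your dimension step. Proposition~\ref{DimensionProposition}, read literally, fails when $\Lambda_\epsilon(u)$ is itself a segment, since then $\partial\Lambda_\epsilon(u)=\Lambda_\epsilon(u)$. The bound $\dim\partial\Lambda_\epsilon(u)\le 1$ is better justified by the fact that the boundary of a semialgebraic set in $\mathbb{R}^2$ has empty interior.
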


\begin{proof}
Indeed, the boundary of $\Lambda_\epsilon({u})$ is a semialgebraic set because of the Tarski--Seidenberg theorem.
Furthermore, it follows from Proposition~\ref{DimensionProposition} that
$$\dim \partial \Lambda_\epsilon({u}) < \dim \Lambda_\epsilon({u}) \le 2,$$
and so $\partial \Lambda_\epsilon({u})$ lies on an algebraic curve in $\mathbb{C} \equiv \mathbb{R}^2.$ By \cite[Theorem~1.1]{HP2017}, there is a finite partition of $\partial \Lambda_\epsilon({u})$ into points and analytic semialgebraic curves. The result follows.
\end{proof}

 \begin{remark} \label{rem:theorem33}\rm Theorem \ref{thr:epsilon-pseudospectrum} is still true if we replace the hypothesis on monicity of the matrix polynomial $P_u(\lambda)$ by the locally non-singularity of the leading coefficient matrix $A_d(u)$ as given in Remark \ref{rem:theorem32}.
\end{remark}

\subsection{The continuity of the numerical range set-valued map}

For any $u\in \mathbb C^p$, the {\em numerical range} of the monic matrix polynomial $P_{u}(\lambda)$ is defined as follows
\begin{eqnarray*}
W(u) := \{ \lambda \in \mathbb{C} \ | \ x^* P_{u}(\lambda) x = 0 \textrm{ for some } x \in \mathbb{C}^n, x^* x = 1\}.
\end{eqnarray*}
Observe that if $\lambda \in \sigma(u),$ then there exists a unit vector $x \in \mathbb{C}^n$ such that $x^* P_{u}(\lambda) x = 0.$ Hence $\sigma(u) \subset W(u).$

By definition, $W(u)$ is the image of the semialgebraic set
\begin{eqnarray*}
\{ (\lambda, x) \in \mathbb{C} \times \mathbb{C}^n \ | \ x^* P_{u}(\lambda) x = 0 \textrm{ and } x^* x = 1\}
\end{eqnarray*}
via the projection map $\mathbb{C} \times \mathbb{C}^n \rightarrow \mathbb{C}, (\lambda, x) \mapsto \lambda,$ and so it is a semialgebraic set because of the Tarski--Seidenberg theorem. Furthermore, we can see that the numerical range $W(u)$ is closed.
Since $P_u(\lambda)$ is a monic matrix polynomial, $W(u)$ is bounded and so it has at most $d$ connected components, where $d$ is the degree of $P_u(\lambda);$ see \cite[Theorems~2.2 and 2.3]{Li1994}.

\begin{lemma}\label{Lemma34}
Let $\tilde{u} \in \mathbb{C}^p.$ For   sufficiently small $\delta > 0$ there exists a compact $K \subset \mathbb{C}$ such that
$$W(u) \subset K \quad \textrm{ for all } \quad u \in \mathbb{B}(\tilde{u}, \delta).$$
\end{lemma}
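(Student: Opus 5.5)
The plan is to bound every $\lambda\in W(u)$ directly, using monicity and a Cauchy-type root bound for the scalar polynomial $x^*P_u(\lambda)x$. Fix $\tilde u$ and any $\delta>0$. Since the entries of $A_0,\dots,A_{d-1}$ are continuous and $\mathbb{B}(\tilde u,\delta)$ is compact, there is a constant $M>0$ with
$$\max_{k=0,\ldots,d-1}\|A_k(u)\|\le M \quad\text{for all } u\in\mathbb{B}(\tilde u,\delta).$$
This is the only place continuity enters. Here $\|\cdot\|$ is a subordinate norm, and I will use that for unit $x$ (Euclidean, $x^*x=1$) one has $|x^*Ax|\le \|A\|_2\le C_n\|A\|$, with $C_n$ depending only on $n$ by equivalence of norms. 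Enlarging $M$ if necessary, I may therefore assume $|x^*A_k(u)x|\le M$ for all unit $x$, all $k\le d-1$, and all $u\in\mathbb{B}(\tilde u,\delta)$.

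Now take $\lambda\in W(u)$ and a unit vector $x$ with $x^*P_u(\lambda)x=0$. Since $A_d(u)=I$, we have $x^*A_d(u)x=x^*x=1$, so
$$\lambda^d=-\sum_{k=0}^{d-1}(x^*A_k(u)x)\lambda^k .$$
This is a monic scalar polynomial equation with coefficients bounded by $M$. By Cauchy's bound, every root satisfies $|\lambda|\le 1+M$. To see this directly, suppose $|\lambda|>1$. Then
$$|\lambda|^d\le M\sum_{k=0}^{d-1}|\lambda|^k< M\frac{|\lambda|^d}{|\lambda|-1},$$
which gives $|\lambda|<1+M$. Hence $W(u)\subset K:=\{\lambda\in\mathbb{C} : |\lambda|\le 1+M\}$ for all $u\in\mathbb{B}(\tilde u,\delta)$, and $K$ is compact.

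There is no real obstacle. The only point needing care is the role of monicity: it makes the leading coefficient $x^*A_d(u)x$ equal to $1$, uniformly bounded away from zero. Without monicity one would need $0\notin$ the numerical range of $A_d(u)$ uniformly near $\tilde u$, which is exactly the extra assumption alluded to in the Remark in the introduction. The argument in fact works for every $\delta>0$, not only for sufficiently small ones.
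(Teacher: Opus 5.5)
Your proof is correct, but it takes a different route from the paper. The paper notes that $x^*P_u(\lambda)x$ is a monic polynomial in $\lambda$ whose coefficients depend continuously on $(x,u)$. It then asserts that the roots are continuous functions of $(x,u)$ and concludes by compactness of $\{x^*x=1\}\times\mathbb{B}(\tilde u,\delta)$. You use compactness only to bound the coefficients $x^*A_k(u)x$ uniformly by $M$, and then Cauchy's root bound gives the explicit disc $|\lambda|\le 1+M$.

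Your version has two advantages. First, it produces an explicit $K$. Second, it avoids the paper's claim that the roots are continuous functions of $(x,u)$. That claim is only true if the roots are read as an unordered multiset that varies continuously, for instance in the optimal-matching distance. With several real parameters the roots generally cannot be selected as individual continuous functions: consider $\lambda^2=u$ near $u=0$ in $\mathbb{C}$. The paper's argument therefore needs that multiset interpretation, whereas yours needs nothing beyond a coefficient bound.

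Your computation also shows exactly how monicity can be relaxed. If $|x^*A_d(u)x|\ge\gamma>0$ uniformly for unit $x$, the same estimate gives $|\lambda|\le 1+M/\gamma$. This matches the hypothesis in part (iii) of the remark following Corollary~\ref{coro31}.

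The paper's route is shorter and parallels its proof of Lemma~\ref{Lemma33} for the spectrum. Both arguments in fact work for every $\delta>0$, as you observe.
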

\begin{proof}
The definition of the monic matrix polynomial $P_u(\lambda)$ implies that
\begin{eqnarray*}
x^* P_{u}(\lambda) x  &=& \left(x^* x\right) \lambda^d  + \left( x^* A_{d - 1}(u) x \right) \lambda^{d - 1}  + \cdots + \left( x^* A_{0}(u) x \right)
\end{eqnarray*}
is a monic polynomial in $\mathbb{C}[\lambda]$ whose coefficients depend continuously on $(x, u) \in \mathbb{C}^n \times \mathbb{C}^p.$ Consequently, the roots of  the equation $x^* P_{u}(\lambda) x = 0$ with $x^* x = 1$ are continuous functions in $(x, u).$ Then the desired result follows  from the compactness of the set $\{x \in \mathbb{C}^n \ | \ x^* x = 1\} \times \mathbb{B}(\tilde{u}, \delta).$
\end{proof}

\begin{theorem}\label{Theorem33}
Let $\tilde{u} \in \mathbb{C}^p.$ For sufficiently small $\delta > 0$, there exist constants $c > 0$ and $\alpha > 0$ such that
\begin{eqnarray*}
W(u) &\subset& W(u') + c \|u - u'\|^\alpha \mathbb{B} \quad \textrm{ for all } \quad u, u' \in \mathbb{B}(\tilde{u}, \delta).
\end{eqnarray*}
In particular, the numerical  set-valued map
$$W \colon \mathbb{C}^p \rightrightarrows \mathbb{C}, \quad u \mapsto W(u),$$
is H\"older continuous of degree $\alpha$ at $\tilde{u}.$
\end{theorem}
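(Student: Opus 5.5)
The argument follows the proof of Theorem~\ref{Theorem32}, with the determinant replaced by the scalar polynomial $q_{x,u}(z):=x^*P_u(z)x$ for a fixed unit vector $x$. Since $P_u$ is monic and $x^*x=1$, $q_{x,u}$ is a monic polynomial of degree $d$ in $z$. Moreover, $W(u)$ is precisely the union over unit vectors $x$ of the root sets $q_{x,u}^{-1}(0)$.

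Fix $\delta>0$ small. By Lemma~\ref{Lemma34} there is a compact $K\subset\mathbb{C}$ with $W(u)\subset K$ for all $u\in\mathbb{B}(\tilde u,\delta)$. By Lemma~\ref{Lemma32} there are $c_0,\alpha_0>0$ such that
$$\|P_u(\lambda)-P_{u'}(\lambda)\|\le c_0\|u-u'\|^{\alpha_0}\quad\text{for all }\lambda\in K,\ u,u'\in\mathbb{B}(\tilde u,\delta).$$

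Now take $u,u'\in\mathbb{B}(\tilde u,\delta)$ and $\lambda\in W(u)$. Choose a unit vector $x$ with $q_{x,u}(\lambda)=0$. We then compare $\lambda$ with the roots of $q_{x,u'}$, which all lie in $W(u')$ because the same $x$ is used.

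The footnote inequality in the proof of Theorem~\ref{Theorem32}, applied to the monic degree-$d$ polynomial $q_{x,u'}$, gives
$$\mathrm{dist}\bigl(\lambda,W(u')\bigr)^d\le \mathrm{dist}\bigl(\lambda,q_{x,u'}^{-1}(0)\bigr)^d\le |q_{x,u'}(\lambda)|.$$
Since $q_{x,u}(\lambda)=0$, we can rewrite and bound the right-hand side as
$$|q_{x,u'}(\lambda)|=|x^*(P_u(\lambda)-P_{u'}(\lambda))x|\le \|P_u(\lambda)-P_{u'}(\lambda)\|.$$
This last step uses $|x^*Ax|\le\|A\|$ for unit $x$. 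That holds for the spectral norm, and up to a constant for any subordinate norm by equivalence of norms on $\mathbb{C}^{n\times n}$; the constant is absorbed into $c_0$.

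Because $\lambda\in K$, combining the three displays yields
$$\mathrm{dist}(\lambda,W(u'))\le c_0^{1/d}\|u-u'\|^{\alpha_0/d}.$$
Since $W(u')$ is closed, this gives the inclusion in the statement with $c=c_0^{1/d}$ and $\alpha=\alpha_0/d$.

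The only delicate point is keeping $x$ the same for $u$ and $u'$. This is what replaces the determinant estimate of Theorem~\ref{Theorem32} by the direct bound $|x^*(P_u-P_{u'})x|\le\|P_u-P_{u'}\|$, so no uniform bound on $\|P_u(\lambda)\|$ is needed. The other thing to check is that $\lambda$ lies in the compact set $K$ on which Lemma~\ref{Lemma32} applies, and this is supplied by Lemma~\ref{Lemma34}. The exponent $1/d$ comes from the degree of $q_{x,u'}$, not from $dn$ as in the spectral case.
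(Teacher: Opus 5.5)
Your proof is correct and is essentially the paper's argument: you fix the unit vector $x$ realizing $\lambda\in W(u)$, bound $\mathrm{dist}(\lambda,W(u'))^d$ by $|x^*P_{u'}(\lambda)x|=|x^*(P_u(\lambda)-P_{u'}(\lambda))x|$ using the root factorization of the monic degree-$d$ polynomial $x^*P_{u'}(z)x$, and conclude with Lemmas~\ref{Lemma32} and~\ref{Lemma34} to get exponent $\alpha_0/d$. Your remark that $|x^*Ax|\le\|A\|$ holds for a general subordinate norm only up to a constant is a point the paper passes over silently, and absorbing that constant into $c_0$ handles it correctly.
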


\begin{proof}
Fix $\delta > 0.$ Then for any $u\in \mathbb{B}(\tilde{u}, \delta)$ and any $\lambda \in W(u)$, by definition, there exists a unit vector $x\in \mathbb C^n$ such that $x^*P_u(\lambda)x =0$. For each  $u'\in \mathbb{B}(\tilde{u}, \delta)$, the function
$$ \mathbb C \rightarrow \mathbb C, \quad  z\mapsto x^*P_{u'}(z)x,$$
is a monic polynomial function of degree $d $ in the variable $z$ whose  coefficients are  $x^*A_k(u')x, ~k=0,\ldots, d,$ which are continuous functions in $u'.$ We can write
$$ x^* P_{u'}(z)x = \prod_{k = 1}^d (z - \lambda_k(u')),$$
where $\lambda_k(u')\in \mathbb C$. Note also that
$$ \lambda_k(u') \in W(u') \quad \textrm{ for all } \quad k = 1,\ldots,d.$$
Therefore
\begin{eqnarray*}
\mathrm{dist}(\lambda, W(u')) & \leq & \min_{k = 1,\ldots,d} |\lambda - \lambda_k(u')|\\
& \leq & \left(\prod_{k = 1}^d |\lambda - \lambda_k(u')| \right)^{\frac{1}{d}} \ = \ \big |x^*P_{u'}(\lambda) x \big |^{\frac{1}{d}} \\
& = & \big |x^*P_{u}(\lambda) x-x^*P_{u'}(\lambda) x \big |^{\frac{1}{d}} \ = \ \big |x^*(P_{u}(\lambda)-P_{u'}(\lambda))x \big |^{\frac{1}{d}}\\
& \leq & \bigg(\|P_{u}(\lambda)-P_{u'}(\lambda)\| \cdot \|x\|^2\bigg)^{\frac{1}{d}} \ =  \ \big \|P_{u}(\lambda)-P_{u'}(\lambda) \big \|^{\frac{1}{d}}.
\end{eqnarray*}
Then the proof follows from Lemmas \ref{Lemma32} and \ref{Lemma34}.
\end{proof}

\begin{corollary}\label{coro31}
Let $\tilde{u} \in \mathbb{C}^p.$ For  sufficiently small $\delta > 0$ there exist constants $c > 0$ and $\alpha > 0$ such that
\begin{eqnarray*}
\partial W(u) &\subset& \partial W(u') + c \|u - u'\|^\alpha \mathbb{B} \quad \textrm{ for all } \quad u, u' \in \mathbb{B}(\tilde{u}, \delta).
\end{eqnarray*}
In particular, the set-valued map
$$\partial  W \colon \mathbb{C}^p \rightrightarrows \mathbb{C}, \quad u \mapsto \partial W(u),$$
is H\"older continuous of degree $\alpha$ at $\tilde{u}.$
\end{corollary}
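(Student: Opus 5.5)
The plan is to split each point of $\partial W(u)$ into two cases according to whether it lies in $W(u')$, and then upgrade a qualitative convergence statement to a Hölder bound via semialgebraic geometry. Fix $\delta>0$ small enough that Theorem~\ref{Theorem33} and Lemma~\ref{Lemma34} apply on $\mathbb{B}(\tilde u,\delta)$, with constants $c_0,\alpha_0$ and a compact $K$ containing every $W(u)$. Take $u,u'\in\mathbb{B}(\tilde u,\delta)$ and $\lambda\in\partial W(u)$.

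\emph{Case $\lambda\notin W(u')$.} Since $W(u')$ is closed, the segment from $\lambda$ to a nearest point of $W(u')$ meets $\partial W(u')$. Hence
$$\mathrm{dist}(\lambda,\partial W(u'))=\mathrm{dist}(\lambda,W(u'))\le c_0\|u-u'\|^{\alpha_0}$$
by Theorem~\ref{Theorem33}.

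\emph{Case $\lambda\in W(u')$.} Here we need points near $\lambda$ lying outside $W(u')$. Because $\lambda\in\partial W(u)$, there are points $\mu\notin W(u)$ arbitrarily close to $\lambda$. If $r:=\mathrm{dist}(\mu,W(u))>c_0\|u-u'\|^{\alpha_0}$, then Theorem~\ref{Theorem33}, applied with the roles of $u$ and $u'$ exchanged, gives $\mu\notin W(u')$. The segment $[\lambda,\mu]$ then meets $\partial W(u')$, so $\mathrm{dist}(\lambda,\partial W(u'))\le|\lambda-\mu|$. What remains is to control how large $r$ can be taken as a function of $|\mu-\lambda|$, uniformly in $u$ and $\lambda$.

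Rather than proving a quantitative bound on $r$ directly, I would use semialgebraicity. Define
$$g(u,u'):=\sup_{\lambda\in\partial W(u)}\mathrm{dist}(\lambda,\partial W(u')),\qquad h(t):=\sup\{g(u,u'): u,u'\in\mathbb{B}(\tilde u,\delta),\ \|u-u'\|\le t\}.$$
Both are semialgebraic: $\partial W(u)$ is defined by a first-order formula over the semialgebraic family $W(u)$, and Theorem~\ref{TarskiSeidenbergTheorem} applies. Also $h$ is bounded, since everything lies in the compact $K$. A one-variable semialgebraic function with $h(t)\to0$ as $t\to0^+$ admits a Puiseux expansion at $0$, so $h(t)\le ct^\alpha$ for small $t$. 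For larger $t$ the bound follows from boundedness of $h$ after enlarging $c$. This yields the corollary.

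So everything reduces to the qualitative statement $h(t)\to0$, i.e.\ uniform Hausdorff upper semicontinuity of $\partial W$. I would argue by contradiction. Suppose there are $u_k,u_k'\to\hat u$ with $\|u_k-u_k'\|\to0$, and $\lambda_k\in\partial W(u_k)$ with $\lambda_k\to\hat\lambda$ and $\mathrm{dist}(\lambda_k,\partial W(u'_k))\ge\eta>0$. By the first case, $\lambda_k\in\operatorname{int}W(u_k')$ eventually, so the whole disc $B(\lambda_k,\eta)$ lies in $W(u'_k)$. Theorem~\ref{Theorem33} then puts $B(\lambda_k,\eta/2)$ inside $W(u_k)+o(1)\mathbb{B}$. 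I would then pass to the limit and use the structure of $W$: each $W(u)$ is a closed set with at most $d$ components, and it is the union over unit $x$ of the $d$ roots of the monic scalar polynomials $x^*P_u(z)x$, which depend continuously on $(x,u)$. The aim is to derive a contradiction with $\lambda_k\in\partial W(u_k)$.

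\textbf{Main obstacle.} This limiting step is the one I expect to be hardest, and possibly to need more than the stated hypotheses. Hausdorff continuity of $W$ alone does not prevent a small hole in $W(u_k)$ from closing up at $\hat u$. If that happens, $\partial W$ is not upper semicontinuous there, and the argument fails. My first attempt would be to exclude such shrinking holes using the root-map description of $W(u)$ together with the semialgebraic local triviality of the family $\{W(u)\}$ near $\tilde u$. If that does not work, I would weaken the conclusion to hold off a lower-dimensional semialgebraic set of parameters.
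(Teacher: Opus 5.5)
\textbf{There is a genuine gap here, and it cannot be closed: the statement itself is false.} Your Case~1 is sound, and so is the final semialgebraic step (from $h(t)\to 0$ to $h(t)\le ct^{\alpha}$ via Puiseux). But everything hinges on the qualitative claim $h(t)\to 0$, which you leave open, and the obstacle you flag is fatal. Theorem~\ref{Theorem33} controls only points of $\partial W(u)$ lying outside $W(u')$. It says nothing about points of $\partial W(u)$ lying deep in $\mathrm{int}\,W(u')$, and such points do occur.

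Take $n=3$, $d=2$, $p=1$, $A_1=\mathrm{diag}(2,-2,0)$ and $A_0(u)=\mathrm{diag}(|u|^2,|u|^2,1+|u|^2)$. With $t_j=|x_j|^2$ one gets $x^*P_u(\lambda)x=\lambda^2+2(t_1-t_2)\lambda+t_3+|u|^2$, where $(t_1,t_2,t_3)$ ranges over the standard simplex.
\begin{itemize}
\item[(a)] Let $u=0$ and $\lambda=a+ib$ with $|\lambda|\le\tfrac12$. The choice $t_3=|\lambda|^2$, $t_1-t_2=-a$ is admissible and gives $x^*P_0(\lambda)x=(\lambda-a)^2+b^2=0$. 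So the closed disc of radius $\tfrac12$ lies in $W(0)$, and $\partial W(0)$ stays at distance at least $\tfrac12$ from $0$.
\item[(b)] Let $u\neq 0$. The constant term is at least $|u|^2>0$, so $0\notin W(u)$. The choice $t=(\tfrac12,\tfrac12,0)$ shows $i|u|\in W(u)$.
\item[(c)] Hence the segment $[0,i|u|]$ contains a point $\lambda_u\in\partial W(u)$ with $|\lambda_u|\le|u|$ and $\mathrm{dist}(\lambda_u,\partial W(0))\ge\tfrac12-|u|$.
\end{itemize}
No choice of $c,\alpha$ makes $\partial W(u)\subset\partial W(0)+c|u|^{\alpha}\mathbb{B}$ hold as $u\to 0$. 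This is exactly your shrinking-hole scenario: $W(u)$ has a hole of size comparable to $|u|$ around $0$, and it fills in at $u=0$. The limiting argument in your last paragraph therefore cannot produce a contradiction.

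So the corollary fails at $\tilde u=0$ in this example. The paper's one-line justification (``an immediate consequence of Theorem~\ref{Theorem33}'') overlooks precisely the issue you identified, since Hausdorff continuity of $W$ yields only your Case~1. What your argument does establish is that one-sided estimate: points of $\partial W(u)$ not in $W(u')$ lie within $c\|u-u'\|^{\alpha}$ of $\partial W(u')$.

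Your fallback is the right repair. Apply Hardt's semialgebraic triviality theorem to $\{(u,\lambda):\lambda\in\partial W(u)\}$. This gives an open dense semialgebraic set of parameters over which the family is locally trivial with compact fibres. Hence $u\mapsto\partial W(u)$ is Hausdorff continuous there, uniformly on compact subsets. Your Puiseux argument, run on small closed balls inside that set, then yields a local H\"older bound.
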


\begin{proof}
This is an immediate consequence of Theorem~\ref{Theorem33}.
\end{proof}

\begin{remark}\rm (i) 
If the matrix polynomial $P_{{u}}(z)$ is real or Hermitian, then its numerical range $W(u)$ is symmetric with respect to the real axis.
The proof is similar to that of Proposition~\ref{Proposition31}, and so we omit the details.

(ii) For each $u \in \mathbb{C}^p,$ the boundary of the numerical range, $\partial W(u),$ is a semialgebraic set because of Tarski--Seidenberg's theorem.
Furthermore, it follows from Proposition~\ref{DimensionProposition} that
$$\dim \partial W(u) < \dim W(u) \le 2,$$
and so $\partial W(u)$ lies on an algebraic curve. By \cite[Theorem~1.1]{HP2017}, there is a finite partition of $\partial W(u)$ into points and analytic semialgebraic curves. Consequently, $\partial W(u)$ has at most a finite number of singularities where the tangent fails to exist, and it intersects itself only at a finite number of points. Furthermore, thanks to \cite[Theorem~1.4]{HP2017}, there exists an integer $N$ such that the number of nondifferentiable points of $\partial W(u)$ is bounded above by $N$ for all $u.$ See also \cite{Jonckheere1998, Maroulas1997} for some algebraic and topological properties of the nondifferentiable points of the boundary of the numerical range of matrix polynomials.

 (iii)  Theorem \ref{Theorem33} is still true if we consider arbitrary matrix polynomial $P_{u}(\lambda) = A_d(u) \lambda^d  + A_{d - 1}(u) \lambda^{d - 1}  + \cdots + A_{0}(u)$ satisfying the  following condition:
$$
\inf_{\|x\|=1} |x^* A_d(u) x| \geq \gamma > 0 \quad \text{for } u \in B(\tilde{u}, \delta).
$$
A sufficient condition should be added is to assume $A_d(u)$ Hermitian positive definite (uniformly in the neighborhood).
\end{remark}

\subsection{The continuity of the joint numerical range set-valued map}

Recall that for any $u\in \mathbb C^p$, the {\em joint numerical range} of  the $(d + 1)$-tuple of the matrices $A_0(u), \ldots, A_{d}(u)$ is defined as
\begin{eqnarray*}
JW(u) := \{(x^* A_0(u) x, \ldots, x^*A_{d}(u)x) \ | \   x \in \mathbb{C}^n, x^*x=1\}.
\end{eqnarray*}
The joint numerical range, being a continuous image of the unit sphere, is compact and connected but not necessarily convex; see \cite{Binding1991}. By the Tarski--Seidenberg theorem, $JW(u)$ is a semialgebraic set, in particular, it has a finite number of connected components. Furthermore, observe from definition that
\begin{eqnarray*}
W(u) &=& \{ \lambda \in \mathbb{C} \ | \ a_d \lambda^d + a_{d - 1} \lambda^{d - 1} + \cdots + a_0 = 0,  \quad (a_{0}, \ldots, a_d) \in JW(u) \}.
\end{eqnarray*}

\begin{theorem}\label{Theorem34}
Let
$P_{u}(\lambda) := A_d(u) \lambda^d  + A_{d - 1}(u) \lambda^{d - 1}  + \cdots + A_{0}(u)$
be a matrix polynomial.  Let $\tilde{u} \in \mathbb{C}^p$. Then, for  sufficiently small $\delta > 0$ there exist constants $c > 0$ and $\alpha > 0$ such that
\begin{eqnarray*}
JW(u) &\subset& JW(u') + c \|u - u'\|^\alpha \mathbb{B} \quad \textrm{ for all } \quad u, u' \in \mathbb{B}(\tilde{u}, \delta).
\end{eqnarray*}
In particular, the joint numerical range set-valued  map
$$JW \colon \mathbb{C}^p \rightrightarrows  \mathbb{C}, \quad u \mapsto JW(u),$$
is H\"older continuous of degree $\alpha$ at $\tilde{u}.$
\end{theorem}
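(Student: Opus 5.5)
The approach is a direct comparison of the two joint numerical ranges through a common unit vector; no root-location or compactness argument is needed. Take $u, u' \in \mathbb{B}(\tilde u,\delta)$ and a point $a=(x^*A_0(u)x,\ldots,x^*A_d(u)x)\in JW(u)$, where $x\in\mathbb{C}^n$ and $x^*x=1$. Use the same $x$ to build the candidate point
$$a' := (x^*A_0(u')x,\ldots,x^*A_d(u')x)\in JW(u').$$
Then $\mathrm{dist}(a, JW(u'))\le \|a-a'\|$, so it suffices to bound $\|a-a'\|$ uniformly in $x$.

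For each coordinate, the Cauchy--Schwarz inequality and the fact that the matrix norm is subordinate give
$$|x^*A_k(u)x - x^*A_k(u')x| = |x^*(A_k(u)-A_k(u'))x| \le \|A_k(u)-A_k(u')\|\,\|x\|^2 .$$
Since all norms on $\mathbb{C}^n$ are equivalent, $\|x\|^2$ for $x^*x=1$ is bounded by a constant $c_n$ depending only on the chosen norm (and $c_n=1$ for the Euclidean norm). Likewise, any norm on $\mathbb{C}^{d+1}$ is bounded by a constant $c_d$ times the max-norm. Combining these,
$$\|a-a'\| \le c_d\, c_n \max_{k=0,\ldots,d}\|A_k(u)-A_k(u')\|.$$
Lemma~\ref{Lemma31} now applies, which uses only the continuity and semialgebraicity of the $A_k$ and needs no monicity or invertibility of $A_d$. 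It gives $\max_k\|A_k(u)-A_k(u')\|\le c_0\|u-u'\|^{\alpha}$ on $\mathbb{B}(\tilde u,\delta)$. Setting $c:=c_dc_nc_0$, we get $a\in JW(u')+c\|u-u'\|^\alpha\mathbb{B}$. Since $a\in JW(u)$ was arbitrary, the inclusion follows. Hölder continuity at $\tilde u$ then follows by taking the open ball of radius $\delta$ as the neighborhood.

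There is no real obstacle here. The only point to note is that, unlike Theorems~\ref{Theorem32} and \ref{Theorem33}, no $1/d$ or $1/(dn)$ root-extraction step appears, so the exponent is exactly the Łojasiewicz exponent of the coefficient maps. In particular, wherever the $A_k$ are locally Lipschitz (for instance, on the open dense semialgebraic set where they are analytic), the same estimate gives Lipschitz continuity of $JW$.
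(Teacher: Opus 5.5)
Your proposal is correct and is essentially the paper's own proof: fix the unit vector $x$ realizing the point of $JW(u)$, reuse it with $u'$ to produce a point of $JW(u')$, bound each coordinate difference by $\|A_k(u)-A_k(u')\|$, and finish with Lemma~\ref{Lemma31}. Your tracking of norm-equivalence constants is a harmless refinement of what the paper leaves implicit (it tacitly takes $\|x\|=1$ and the max-norm on $\mathbb{C}^{d+1}$); strictly, $|x^*Ax|\le\|A\|\,\|x\|^2$ need not hold verbatim for every subordinate norm (it does for the spectral norm, by Cauchy--Schwarz), but any discrepancy is again a constant factor absorbed into your $c$.
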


\begin{proof}
Fix $\delta > 0.$ By Lemma~\ref{Lemma31}, there exist constants $c > 0$ and $\alpha > 0$ such that
for all $u, u'\in \mathbb{B}(\tilde{u}, \delta)$ it holds that
$$\max_{k = 0, \ldots, d} \|A_k(u) - A_k(u')\| \le c\|u - u'\|^\alpha.$$

Take any $u, u'\in \mathbb{B}(\tilde{u}, \delta)$ and any $y \in JW(u).$ By definition, there exists a vector $x\in \mathbb C^n$ with $x^*x = 1$ such that
$$y = (x^* A_0(u) x, \ldots, x^*A_{d}(u)x).$$
Let
$$y' := (x^* A_0(u') x, \ldots, x^*A_{d}(u')x) \in JW(u').$$
We have
\begin{align*}
\mathrm{dist}(y, JW(u')) & \leq   \|y-y'\|\\
& \leq \max_{k = 0, \ldots, d} \big|x^*\big(A_k(u) - A_k(u')\big)x\big|\\
& \leq \max_{k = 0, \ldots, d} \|A_k(u) - A_k(u')\| \|x\|^2 \ \le \  c\|u - u'\|^\alpha.
\end{align*}
This gives the desired statement.
\end{proof}

\subsection{The stability of the spectral radius}

The {\em spectral radius}   of the matrix polynomial $P_u(\lambda)$ is defined by
$$\mathfrak{r}(u) := \max \{|\lambda|  \ | \  \lambda \in \sigma (u)\}.$$
The stability of the spectral radius is given as follows.

\begin{theorem}  Let
$P_{u}(\lambda) := A_d(u) \lambda^d  + A_{d - 1}(u) \lambda^{d - 1}  + \cdots + A_{0}(u)$
be a matrix polynomial. 
Let $\tilde{u} \in \mathbb{C}^p.$ For  sufficiently small $\delta > 0$ there exist constants $c > 0$ and $\alpha > 0$ such that
\begin{eqnarray*}
|\mathfrak{r}(u) - \mathfrak{r}(u')| & \le & c \|u - u'\|^\alpha \quad \textrm{ for all } \quad u, u' \in \mathbb{B}(\tilde{u}, \delta).
\end{eqnarray*}
\end{theorem}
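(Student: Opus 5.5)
We will deduce the statement directly from the Hölder continuity of the spectrum in Theorem~\ref{Theorem32}. We work in the monic setting, or more generally under the local nonsingularity of $A_d(u)$ from Remark~\ref{rem:theorem32}, so that Theorem~\ref{Theorem32} is available. Fix $\tilde u$ and take $\delta>0$ small enough that Theorem~\ref{Theorem32} provides constants $c>0$ and $\alpha>0$ with
$$\sigma(u)\subset \sigma(u')+c\|u-u'\|^\alpha\mathbb{B}\quad\textrm{ for all } u,u'\in\mathbb{B}(\tilde u,\delta).$$
The spectrum $\sigma(u)$ is nonempty, being the zero set of the polynomial $\det P_u$ of degree $dn\ge 1$. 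It is also finite, so the maximum defining $\mathfrak r(u)$ is attained.

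Take $u,u'\in\mathbb{B}(\tilde u,\delta)$ and choose $\lambda\in\sigma(u)$ with $|\lambda|=\mathfrak r(u)$. By the inclusion above there is $\lambda'\in\sigma(u')$ with $|\lambda-\lambda'|\le c\|u-u'\|^\alpha$. Then
$$\mathfrak r(u)=|\lambda|\le|\lambda'|+|\lambda-\lambda'|\le \mathfrak r(u')+c\|u-u'\|^\alpha .$$
Exchanging the roles of $u$ and $u'$, which is allowed because the inclusion in Theorem~\ref{Theorem32} holds for all ordered pairs in the ball, gives $\mathfrak r(u')\le\mathfrak r(u)+c\|u-u'\|^\alpha$. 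The two inequalities together yield $|\mathfrak r(u)-\mathfrak r(u')|\le c\|u-u'\|^\alpha$ with the same constants $c$ and $\alpha$.

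The argument has no real obstacle. The one point needing care is the hypothesis: the theorem is stated for a general $P_u$, but the spectrum must stay bounded and nonempty near $\tilde u$ for $\mathfrak r$ to be finite and Theorem~\ref{Theorem32} to apply. This requires the standing monicity assumption, or the local nonsingularity of $A_d$ from Remark~\ref{rem:theorem32}, and I would state that explicitly. If one prefers a self-contained route, the same bound follows from the root estimate (\ref{inequality3}) together with Lemmas~\ref{Lemma32} and \ref{Lemma33}, reproducing the proof of Theorem~\ref{Theorem32}.
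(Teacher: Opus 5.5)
Your argument is correct, but it takes a different route from the paper. The paper does not use Theorem~\ref{Theorem32} here. It observes two facts about $\mathfrak{r}$:
\begin{itemize}
\item it is continuous, because the roots of the monic polynomial $\det P_u(\lambda)$ depend continuously on $u$;
\item it is semialgebraic, by Tarski--Seidenberg.
\end{itemize}
It then applies the {\L}ojasiewicz-type Proposition~\ref{LojasiewiczInequality} directly to the scalar function $\mathfrak{r}$ on the compact ball $\mathbb{B}(\tilde u,\delta)$. That is a two-line argument, independent of the set-valued estimates, but the exponent it produces is an uncontrolled {\L}ojasiewicz exponent of $\mathfrak{r}$.

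Your reduction instead combines the inclusion of Theorem~\ref{Theorem32} with the fact that $\lambda\mapsto|\lambda|$ is $1$-Lipschitz. So $\mathfrak{r}$ inherits exactly the constants of that theorem, namely $\alpha=\alpha_0/(dn)$, where $\alpha_0$ is the H\"older exponent of the coefficients from Lemma~\ref{Lemma31}. This gains you two things. First, the exponent is tied explicitly to the regularity of the $A_k$; for instance it is $1/(dn)$ whenever the coefficients are locally Lipschitz, e.g.\ polynomial in $u$. Second, you never need semialgebraicity of $\mathfrak{r}$ itself, so the argument also works when the coefficients are merely H\"older continuous.

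Your caveat about the hypothesis matches the paper. Its proof also relies on monicity of $\det P_u(\lambda)$ (``by assumption''), despite the general wording of the statement.
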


\begin{proof}
By assumption, $\det P_u(\lambda) \in \mathbb{C}[\lambda]$ is a monic polynomial whose coefficients depend continuously on the parameter $u \in \mathbb{C}^p.$
Hence, all the roots of the equation
$$\det P_u(\lambda) = 0$$
depend continuously on the parameter $u.$ It follows that the function $u \mapsto \mathfrak{r}(u)$ is continuous. On the other hand, by the Tarski--Seidenberg theorem, we can see that the function $$\mathbb{C}^p \to \mathbb{R}, \quad u \mapsto \mathfrak{r}(u),$$
is semialgebraic. Then the desired conclusion follows from Proposition~\ref{LojasiewiczInequality}.
\end{proof}

\subsection{The genericity of set-valued maps}

The next result shows that, in general, the set-valued maps $\sigma, \Lambda_{\epsilon}$, $W,$ $\partial W$ and  $JW$ are H\"older continuous with exponents explicitly determined.

\begin{theorem}\label{Theorem41}
There exists an open and dense semialgebraic set $U \subset \mathbb{C}^p$ satisfying the following conditions:

\begin{enumerate}
\item[(i)]  The restriction of the set-valued maps $\sigma, \Lambda_{\epsilon}$, $W$ and $\partial W$ on $U$ are H\"older continuous of degree $\frac{1}{nd}, \frac{1}{nd}$, $\frac{1}{d}$ and $\frac{1}{d},$ respectively.

\item[(ii)] The restriction of the set-valued map $JW$ on $U$ is Lipschitz continuous.

\item[(iii)] The restriction of the spectral radius $\mathfrak{r}$ on $U$ is analytic.
\end{enumerate}

\end{theorem}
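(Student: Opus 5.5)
Fix a semialgebraic cell decomposition of $\mathbb{C}^p\cong\mathbb{R}^{2p}$ compatible with the finitely many continuous semialgebraic functions involved: the real and imaginary parts of the entries of $A_0,\ldots,A_{d-1}$, and the spectral radius $\mathfrak{r}$. The latter is semialgebraic and continuous by the proof of the preceding theorem. By the cell decomposition theorem and generic smoothness of semialgebraic functions (\cite{Bochnak1998}), there is a closed semialgebraic set $Z$ with $\dim Z<2p$ such that on $U:=\mathbb{C}^p\setminus Z$ all these functions are analytic. A semialgebraic set of positive codimension has empty interior, so $U$ is open and dense. Part (iii) is then immediate.

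For part (ii), the entries of $A_k$ are analytic, hence $C^1$, on $U$. So on every compact convex neighbourhood $\mathbb{B}(\tilde u,\delta)\subset U$ of a point $\tilde u\in U$ the mean value inequality gives
\[
\max_k\|A_k(u)-A_k(u')\|\le c\|u-u'\|.
\]
In other words, Lemma~\ref{Lemma31} holds with $\alpha=1$. Repeating the argument of Theorem~\ref{Theorem34} verbatim then gives $JW(u)\subset JW(u')+c\|u-u'\|\mathbb{B}$, which is local Lipschitz continuity on $U$.

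For part (i), I would feed the Lipschitz bound into the earlier proofs. Lemma~\ref{Lemma32} now holds with exponent $1$. In the proof of Theorem~\ref{Theorem32} we obtained
\[
\mathrm{dist}(\lambda,\sigma(u'))^{dn}\le n c_1^{n-1}c_0\|u-u'\|^{\alpha_0},
\]
and with $\alpha_0=1$ this gives degree $\frac1{nd}$. Likewise the chain of inequalities in Theorem~\ref{Theorem33} gives $\mathrm{dist}(\lambda,W(u'))\le\|P_u(\lambda)-P_{u'}(\lambda)\|^{1/d}\le c\|u-u'\|^{1/d}$, which is degree $\frac1d$. The statement for $\partial W$ is transferred as in Corollary~\ref{coro31}.

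For $\Lambda_\epsilon$, Claims~\ref{claim1} and \ref{claim2} only use the spectral estimate \eqref{equ-pseudo} at points $v,v'$ within $\epsilon$ of $u$, and these may lie outside $U$. This is the main obstacle. I would first shrink $U$ to the open dense set of $u\in U$ with $\mathbb{B}(u,\epsilon)$ avoiding $Z$. That fails when $\epsilon$ is large, so instead I would look for an argument that never leaves $U$. In Claim~\ref{claim2}, $v$ and $v'$ lie on a segment of length at most $\|u-u'\|$, and $\sigma(v')$ only needs to be close to $\lambda$. Using the spectral Hölder bound with exponent $\frac1{nd}$ along a short path inside $U\cap\mathbb{B}(u',\epsilon)$, which exists for generic configurations because $Z$ has codimension at least one, I would aim for the bound $c\|u-u'\|^{1/(nd)}$. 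If this becomes too delicate, a fallback is to enlarge $Z$ by the semialgebraic set of $u$ where the sphere $\partial\mathbb{B}(u,\epsilon)$ meets $Z$ non-transversally, and to use a local analytic trivialization there.
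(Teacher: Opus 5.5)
\textbf{The gap: $\Lambda_\epsilon$.} For (ii), (iii), and the $\sigma$ and $W$ parts of (i), you follow the paper's proof. It takes generic analyticity of the $A_k$ and $\mathfrak{r}$ on an open dense semialgebraic $U$, a Lipschitz bound on balls $\mathbb{B}(\tilde u,\delta)\subset U$, and then $\alpha=1$ in the estimates of Theorems~\ref{Theorem32}, \ref{Theorem33} and~\ref{Theorem34}. You rightly note that Claims~\ref{claim1} and~\ref{claim2} use the spectral estimate at points $v$ up to distance $\epsilon$ from $u$, which may lie in $Z$. None of your repairs works.
\begin{enumerate}
\item[(a)] The set of $u$ with $\mathbb{B}(u,\epsilon)\cap Z=\emptyset$ is not dense for \emph{any} $\epsilon>0$ once $Z\neq\emptyset$, since its complement $Z+\mathbb{B}(0,\epsilon)$ has nonempty interior.
\item[(b)] The short-path idea misplaces the difficulty. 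The issue is not joining $v$ to $\mathbb{B}(u',\epsilon)$ inside $U$. It is that the only parameters $v$ with $\lambda\in\sigma(v)$ may lie on $Z$ or arbitrarily close to it. There the $A_k$ have no uniform Lipschitz constant, so the degree-$\frac{1}{nd}$ spectral estimate is unavailable exactly at $v$.
\item[(c)] The transversality fallback is a sensible direction but not yet an argument. You would have to move $v$ into $\mathbb{B}(u',\epsilon)$, within the fibre $\{w:\lambda\in\sigma(w)\}$ or within a stratum of $Z$ on which the $A_k$ are analytic, by $O(\|u-u'\|)$. This must hold uniformly in $v\in\mathbb{B}(u,\epsilon)$, including near frontiers of strata where derivatives blow up, and that uniform statement is the whole content.
\end{enumerate}

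\textbf{The paper has the same gap.} Its proof just says (i) follows from the earlier proofs, and it does not even invoke Theorem~\ref{thr:epsilon-pseudospectrum}. So it tacitly runs Claims~\ref{claim1}--\ref{claim2} with the Lipschitz bound, which is the step you found invalid. In fact the conclusion fails for $U$ equal to the analyticity locus. Take $n=d=p=1$, the modulus as norm, and $P_u(\lambda)=\lambda-\sqrt{|\mathrm{Re}\,u|}$. Then $A_0$ and $\mathfrak{r}(u)=\sqrt{|\mathrm{Re}\,u|}$ are analytic exactly off $\{\mathrm{Re}\,u=0\}$, and $\Lambda_\epsilon(u)=\bigl[\sqrt{\max(0,|\mathrm{Re}\,u|-\epsilon)},\sqrt{|\mathrm{Re}\,u|+\epsilon}\,\bigr]$. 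For $\mathrm{Re}\,\tilde u=\epsilon$ and $t>0$, $0\in\Lambda_\epsilon(\tilde u)$ but $\mathrm{dist}(0,\Lambda_\epsilon(\tilde u+t))=\sqrt t$. That is degree $\frac12$, not $\frac{1}{nd}=1$.

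Since (iii) forces $U\subset\{\mathrm{Re}\,u\neq0\}$, and every such $u$ is bad for $\epsilon=|\mathrm{Re}\,u|$, no single $U$ works for all $\epsilon$. So $U$ must be shrunk in an $\epsilon$-dependent way, and a new ingredient is needed. One possible route is a Sard-type theorem for semialgebraic set-valued maps with closed graph (Ioffe), applied to $\lambda\mapsto\{u:\lambda\in\Lambda_\epsilon(u)\}$. Off the closure of its critical values, a semialgebraic set of dimension $<2p$, $\Lambda_\epsilon$ has the Aubin property at every $(u,\lambda)$ with $\lambda\in\Lambda_\epsilon(u)$. Compactness of $\Lambda_\epsilon(u)$ together with Theorem~\ref{thr:epsilon-pseudospectrum} then gives local Lipschitz continuity, hence degree $\frac{1}{nd}$.

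Finally, transferring from $W$ to $\partial W$ ``as in Corollary~\ref{coro31}'' is also inherited from the paper and is not automatic, because Hausdorff-close sets can have Hausdorff-far boundaries.
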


\begin{proof}
By ~\cite[Theorem~1.7]{HP2017}, there exists an open and dense semialgebraic set $U \subset \mathbb{C}^p$ such that the restriction
of the spectral radius $\mathfrak{r}$ and of the matrices $A_0(u), \ldots, A_{d}(u)$ on $U$ are analytic.
In particular, the item~(iii) follows.

Take any $\tilde{u} \in U.$ There exists $\delta > 0$ such that $\mathbb{B}(\tilde{u}, \delta) \subset U.$ Then we can find a constant $c > 0$ such that
\begin{eqnarray*}
\max_{k = 0, \ldots, d} \|A_k(u) - A_k(u')\| & \le & c \|u - u'\| \quad \textrm{ for all } \quad u, u' \in \mathbb{B}(\tilde{u}, \delta).
\end{eqnarray*}
In other words, we can let $\alpha = 1$ in Lemmas~\ref{Lemma31}~and~\ref{Lemma32}.
Finally, the items (i) and (ii) follow immediately from the proofs of Theorems~\ref{Theorem32}, \ref{Theorem33}, \ref{Theorem34} and
Corollary~\ref{coro31}.
\end{proof}

\section{Stability of Jordan pairs of monic matrix polynomials} \label{jordanpair}

Throughout this section we study the stability of the  Jordan pairs of the  monic matrix polynomial
$$P_{u}(\lambda) := I \lambda^d  + A_{d - 1}(u) \lambda^{d - 1}  + \cdots + A_{0}(u),$$
where   $A_{k}(u), k = 0, \ldots, d - 1,$ are complex $n \times n$ matrices whose entries are continuous semialgebraic functions in $u \in \mathbb{C}^p.$

\subsection{Jordan pairs of monic matrix polynomials}
Firstly we recall  the definition of the Jordan pair of the monic matrix polynomial
$$P(z) := I  z^d  + A_{d - 1}  z^{d - 1}  + \cdots + A_{0},$$
 where $A_k \in \mathbb C^{n \times n}$ for all  $k = 0, \ldots, d - 1$.  This notation can be seen in the book \cite{Gohberg1982}.

Let $\lambda_0$ be an eigenvalue of $P(z)$.

\begin{definition}\rm
An $n$-dimensional vector polynomial $\varphi(\lambda)$ such that $\varphi(\lambda_0)\not = 0$ and $P(\lambda_0)$ $\varphi(\lambda_0)$ $  =0$ is called a \emph{root polynomial } of $P(z)$ corresponding to $\lambda_0$. The order of $\lambda_0$ as a zero of
$P(\lambda_0)\varphi(\lambda_0)$ is called the \emph{order} of the root polynomial $\varphi(\lambda)$.

Let $\varphi_1 (\lambda)$ be a root polynomial of $P(z)$ corresponding to $\lambda_0$ with the largest order $k_1$.  Then we may write
$$ \varphi_1(\lambda)=\displaystyle\sum_{j=0}^{k_1-1} (\lambda - \lambda_0)^j \varphi_{1j},$$
and $\varphi_{10}, \ldots, \varphi_{1 k_1-1}$ form a \emph{Jordan chain} of $P(z)$ corresponding to $\lambda_0$, i.e. for each $i = 0, \ldots, k_1-1$, we have
$$ \sum_{j=0}^i \dfrac{1}{j!}P^{(j)}(\lambda_0)\varphi_{1 i-j}=0,$$
where $P^{(j)}(\lambda_0)$ denotes the $j$th derivative of the matrix polynomial $P(z)$ with respect to . $z$ at the point $\lambda_0$.

Let  $\varphi_2(\lambda)=\displaystyle\sum_{j=0}^{k_2-1} (\lambda - \lambda_0)^j \varphi_{2j}$ be a  root polynomial of $P(z)$   with the largest order among all the root polynomials whose eigenvector is not a scalar multiple of $\varphi_{10}$ $(k_2 \leq k_1)$.

Assume that $\varphi_1, \ldots, \varphi_{s-1}$ have already chosen, and we can  write
$$ \varphi_i(\lambda)= \displaystyle\sum_{j=0}^{k_i-1} (\lambda - \lambda_0)^j \varphi_{ij}, ~ i=0, \ldots, s-1,$$
let us define
$$ \varphi_s(\lambda)= \displaystyle\sum_{j=0}^{k_s-1} (\lambda - \lambda_0)^j \varphi_{sj} $$
a root polynomial of $P(z)$ where $k_s$ is the largest order among all the root polynomials whose eigenvector is not in the span of the eigenvectors $\varphi_{10}, \ldots,\varphi_{s-1 0}$. Continue this process until the set $\mbox{Ker} P(\lambda_0)$ of eigenvectors corresponding to $\lambda_0$ is exhausted. Therefore we have already constructed $r$ root polynomials
$$ \varphi_i(\lambda)= \displaystyle\sum_{j=0}^{k_i-1} (\lambda - \lambda_0)^j \varphi_{ij}, ~ i=0, \ldots, r,$$
where $r := \dim \mbox{Ker} P(\lambda_0)$. The Jordan chains
$$\varphi_{10}, \ldots, \varphi_{1 k_1-1}, \quad \varphi_{20}, \ldots, \varphi_{2 k_2-1}, \quad  \ldots, \quad \varphi_{r0}, \ldots, \varphi_{r k_r-1}  $$
is said to form  a \emph{canonical set of Jordan chains} of $P(z)$ corresponding to the eigenvalue $\lambda_0$.
\end{definition}

Now let $\lambda_1, \ldots, \lambda_r$  be the distinct eigenvalues of $P(z)$, and let $\alpha_i$ be the multiplicity of $\lambda_i,$ $i=1, \ldots, r$. It is clear that $\displaystyle\sum_{i=1}^r \alpha_i=mn$.
\begin{definition} \rm For each $\lambda_i$, let us choose a canonical set of Jordan chains of $P(z)$ corresponding to $\lambda_i$:

$$\varphi_{j0}^{(i)}, \ldots, \varphi_{j k_j^{(i)}-1}^{(i)}, \quad j=0, \ldots,s_i. $$
It should be noted that $\displaystyle\sum_{j=1}^{s_i} k_j^{(i)} = \alpha_i$.

Denote
$$X_i:=\big[\varphi_{10}^{(i)}  \cdots \varphi_{1 k_1^{(i)}-1}^{(i)}, \varphi_{20}^{(i)}  \cdots \varphi_{2 k_2^{(i)}-1}^{(i)}, \cdots, \varphi_{s_i 0}^{(i)}  \cdots \varphi_{s_i k_{s_i}^{(i)}-1}^{(i)}\big] $$
which is a matrix of size $n\times \left(\sum_{j=1}^{s_i} k_j^{(i)}\right) = n\times \alpha_i$.

For each $j=1, \ldots, s_i$, denote by $J_{ij}$ the Jordan block of size $k_j^{(i)} \times k_j^{(i)}$ corresponding to the eigenvalue $\lambda_i.$ We denote by $J_i$ the block-diagonal
$$ J_i=\left[\begin{array}{cccc}
J_{i1} & & & 0\\
       &J_{i2}& & \\
       & & \ddots & \\
   0   & & & J_{is_i}
\end{array}\right]. $$
Then the pair $(X_i,J_i)$ constructed as above is called a {\em Jordan pair of $P(z)$ corresponding to $\lambda_i$}.

Denote
$$X:=\big[X_1, \ldots, X_r \big], \quad J:=\big[J_1,\ldots, J_r\big]. $$
Then the pair $(X,J)$ is called a {\em Jordan pair} of the monic matrix polynomial $P(z)$.
\end{definition}

\subsection{Stability of Jordan pairs}

Now we return to consider the monic matrix polynomial
 $$P_{u}(\lambda) := I \lambda^d  + A_{d - 1}(u) \lambda^{d - 1}  + \cdots + A_{0}(u), ~ u \in \mathbb C^p.$$
For each $u \in \mathbb{C}^p$, let  $(X_u, J_u)$ be a Jordan pair of  $P_u(z)$, where
$$ X_u=\big[X_u^{(1)}, \ldots, X_u^{(r)}\big], \quad J_u := \mathrm{diag}\big[J_u^{(1)}, \ldots, J_u^{(r)})\big],$$
and each $(X_u^{(k)}, J_u^{(k)})$, $k = 1, \ldots, r,$ is a Jordan pair of $P_u(z)$ corresponding to the eigenvalue $\lambda_i(u)$, where $\lambda_1(u), \ldots, \lambda_r(u)$ are distinct eigenvalues of $P_u(z).$

The following theorem is inspired by the work of  I.~Gohberg, P.~Lancaster and L.~Rodman \cite[Proposition~5.10 and Theorem~5.11]{Gohberg1982}.

\begin{theorem}\label{Theorem42}
There is a finite partition of $\mathbb{C}^p$ into analytic semialgebraic submanifolds $M_{1}, \ldots, M_N$ such that for each $t = 1, \ldots, N,$ the following statements hold:
\begin{enumerate}
\item[(i)] For every $u \in M_t$ the number $r$ of Jordan blocks in $J_u$ and their sizes $q_1, \ldots, q_r$ are independent of $u.$

\item[(ii)] The eigenvalues $\lambda_k(u), k = 1, \ldots, r,$ are analytic functions of $u$ on $M_{t}.$

\item[(iii)] The blocks $X_u^{(k)}, k = 1, \ldots, r,$ of $X_u$ are analytic functions of $u$ on $M_{t}.$
\end{enumerate}
\end{theorem}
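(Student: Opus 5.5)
The plan is to reduce the statement to the linearization of $P_u$ and then use semialgebraic stratification. Let $C_u$ be the $dn\times dn$ companion matrix of $P_u(\lambda)$. Its entries are the entries of $A_0(u),\ldots,A_{d-1}(u)$ together with constants, so they are continuous semialgebraic in $u$. By \cite{Gohberg1982}, $\det(\lambda I - C_u)=\det P_u(\lambda)$, and Jordan pairs of $P_u$ correspond to Jordan forms of $C_u$. More precisely, if $C_u S_u = S_u J_u$ with $S_u$ invertible, then $X_u$ is the first block row $[I\ 0\ \cdots\ 0]\,S_u$ of $S_u$, and the columns of $S_u$ are given by $\mathrm{col}(X_uJ_u^{j})_{j=0}^{d-1}$. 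The statement therefore follows once we have a finite partition of $\mathbb{C}^p$ on which the Jordan structure of $C_u$ is constant and the eigenvalues and a Jordan basis $S_u$ can be chosen analytically in $u$.

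\textbf{Step 1 (constant Jordan type).} Fix a Segre characteristic, i.e. the number $r$ of distinct eigenvalues together with the partition of block sizes attached to each. The set of $u$ for which $C_u$ has this characteristic is semialgebraic. To see this, express it by a first-order formula: there exist $\lambda_1,\ldots,\lambda_r$, pairwise distinct, such that $\operatorname{rank}(C_u-\lambda_i)^m$ equals prescribed values for $m=1,\ldots,dn$. Rank conditions are polynomial conditions on minors, so Theorem~\ref{TarskiSeidenbergTheorem} applies. There are only finitely many characteristics, so this gives a finite semialgebraic partition, and (i) holds on each piece.

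\textbf{Step 2 (refine into analytic strata).} On a fixed piece $E$, consider the semialgebraic set
$$\Gamma=\{(u,\lambda_1,\ldots,\lambda_r,S):u\in E,\ S \text{ invertible},\ C_uS=S\,\mathrm{diag}(J(\lambda_i))\}.$$
We need a semialgebraic choice function. The eigenvalues have to be ordered canonically, for instance lexicographically by $(\mathrm{Re},\mathrm{Im})$ within the prescribed multiplicity classes; this makes $u\mapsto(\lambda_i(u))$ a well-defined semialgebraic map. For $S$, semialgebraic choice (definable choice, \cite{Bochnak1998}) gives a semialgebraic section $u\mapsto S_u$ of the projection of $\Gamma$ to $E$. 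Such a section need not be continuous. We then apply the cell decomposition theorem \cite[Theorem~1.1]{HP2017}, in the version compatible with finitely many semialgebraic maps, to the maps $u\mapsto\lambda_i(u)$ and $u\mapsto S_u$. This refines $E$ into finitely many analytic semialgebraic submanifolds on each of which these maps are analytic (a semialgebraic map is analytic on the cells of a suitable decomposition; compare \cite[Theorem~1.7]{HP2017}). Taking $X_u=[I\ 0\cdots0]S_u$ gives (ii) and (iii), with $X_u^{(k)}$ the corresponding column blocks. The pieces are then the $M_t$.

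\textbf{Main obstacle.} The delicate point is Step 2. A Jordan pair is not unique, so the theorem must be read as saying that a Jordan pair can be selected analytically on each $M_t$. We must check two things. First, the semialgebraically chosen $S_u$ really yields a Jordan pair in the sense of canonical sets of Jordan chains, i.e. via the correspondence $\mathrm{col}(X_uJ_u^j)$ from \cite[Prop.~5.10, Thm.~5.11]{Gohberg1982}. Second, analyticity obtained on the open cells of a decomposition also holds on lower-dimensional cells. I would settle the second point by applying the stratification recursively on the dimension, using Proposition~\ref{DimensionProposition}: decompose, keep the open cells on which the maps are analytic, and repeat on the complement, which has strictly lower dimension. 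The recursion therefore terminates after finitely many steps.
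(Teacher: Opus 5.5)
Your proposal is correct and takes the same route as the paper. It linearizes via the companion matrix, cuts $\mathbb{C}^p$ into semialgebraic pieces of constant Jordan type using rank conditions on $(\lambda_i(u)I - C(u))^j$ and Tarski--Seidenberg, and then refines with the analytic stratification of \cite[Theorem~1.7]{HP2017}. Your canonical ordering of the eigenvalues and definable choice of the Jordan basis $S_u$ (with $X_u$ its first block row) also supply a step the paper leaves implicit: why $\lambda_k(u)$ and $X_u^{(k)}$ are semialgebraic maps to which that stratification theorem can be applied.
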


\begin{proof}
Passing to the linearization (with companion matrix) we can suppose $P_u(\lambda)$ is linear, i.e., $P_u(\lambda) = I \lambda - C(u),$ where
\begin{eqnarray*}
C(u) &:=&
\begin{pmatrix}
0 & I & 0 & \cdots & & 0 \\
0 & 0 & I & \cdots & & 0 \\
\vdots & \vdots & \vdots & \vdots & & \vdots \\
0 & 0 & 0 & \cdots & & I \\
- A_0(u) & - A_1(u) &  - A_2(u) & \cdots &  & -A_{d - 1}(u)
\end{pmatrix}
\end{eqnarray*}
is the companion matrix of $P_u(\lambda).$ For given $k = 1, \ldots, dn,$ denote
\begin{eqnarray*}
Z_k &:=& \{u \in \mathbb{C}^p \ | \ \det \big (I \lambda - C(u) \big)  = 0 \textrm{ has exactly $k$ different roots} \}.
\end{eqnarray*}
Then the sets $Z_k$ are semialgebraic (by the Tarski--Seidenberg theorem) and $\mathbb{C}^p = \displaystyle\bigcup_{k = 1}^{dn} Z_k.$

Fix $k$ and assume that $Z_k \ne \emptyset.$ From the construction of $Z_k$ it follows that the number of different
eigenvalues of $I \lambda - C(u)$ is constant $k$ for $u \in Z_k.$ So let $\lambda_1(u), \ldots, \lambda_k(u), u \in Z_k,$ be all the different eigenvalues of $I \lambda - C(u).$
For all $i = 1, \ldots, k,$ and all $j, l  = 1, 2, \ldots, dn,$ the sets
\begin{eqnarray*}
\{u \in Z_k & | & \mathrm{rank} \big (I \lambda_i(u) - C(u) \big )^j = l \}
\end{eqnarray*}
are semialgebraic  by the Tarski--Seidenberg theorem again, since rank constraints can be expressed by polynomial equations and inequalities involving matrix minors, and it is clear that $Z_k$ is the disjoint union of these sets. Furthermore, thanks to~\cite[Theorem~1.7]{HP2017}, we may decompose $Z_k$ onto a finite disjoint union of analytic semialgebraic submanifolds of $\mathbb{C}^p$ such that the restriction of the eigenvalue functions
$\lambda_{k}(u)$ and of the blocks $X_u^{(k)}$ for $k = 1, \ldots, r,$ on each submanifold are analytic.

In summary, we can write $\mathbb{C}^p = \displaystyle\bigcup_{t = 1}^N M_t,$ where $M_t$ are nonempty analytic semialgebraic submanifolds satisfying the following conditions:
\begin{enumerate}
\item[(a)] $M_t \cap M_s = \emptyset$ for $t \ne s;$
\item[(b)] the number of different eigenvalues of $I \lambda - C(u)$ is constant for $u \in M_t;$
\item[(c)] for every eigenvalue $\lambda_0(u)$ (which is analytic on $M_t$) of $I \lambda - C(u),$
\begin{eqnarray*}
r({u; j}) &:=& \mathrm{rank} \big (I \lambda_0(u) - C(u) \big )^j
\end{eqnarray*}
is constant for $u \in M_t$ and $j = 1, 2, \ldots.$ The sizes of the Jordan blocks in the Jordan normal form of $C(u)$ corresponding to $\lambda_0(u)$ are completely determined by the numbers
$r(u; j);$ namely,
\begin{eqnarray} \label{PT5}
n - r({u; j}) &=& \gamma_1(u) + \cdots + \gamma_j(u) \quad \textrm{ for } \quad j = 1, 2, \ldots, dn,
\end{eqnarray}
where $\gamma_j(u)$ is the number of the Jordan blocks corresponding to $\lambda_0(u)$ whose size is not less than $j$ (so $\gamma_1(u)$ is just the number of the Jordan blocks). Equality~\eqref{PT5} is easily observed for the Jordan normal form of $C(u);$ then it clearly holds for $C(u)$ itself. Since $r(u; j)$ are constant for $u \in M_t,$
\eqref{PT5} ensures that the Jordan structure of $C(u)$ corresponding to $\lambda_0(u)$ is also constant for $u \in M_t.$
\end{enumerate}
The proof is complete.
\end{proof}

\begin{remark} \label{rem:theorem42}\rm Theorem \ref{Theorem42} is still true if we replace the hypothesis on monicity of the matrix polynomial $P_u(\lambda)$ by the locally non-singularity of the leading coefficient matrix $A_d(u)$ as given in Remark \ref{rem:theorem32}.
\end{remark}


\bibliographystyle{apalike}
\bibliography{MPB}
\end{document}